\documentclass[11pt, a4paper]{amsart}
\usepackage{amsmath,amssymb,amsfonts, float}
\usepackage[all]{xy}
\usepackage{caption}
\usepackage{enumerate}
\usepackage{mathpazo}
\usepackage{a4wide}
\usepackage{diagbox}
\usepackage{empheq}

\usepackage{bm}
\usepackage{graphicx}
\usepackage[breaklinks=true]{hyperref}

\usepackage{xcolor}
\usepackage{multicol}
\usepackage{tabularx}

\usepackage{hyperref}
\usepackage[capitalize]{cleveref}

\usepackage[normalem]{ulem}

\setlength{\textheight}{9.5in}

\pagestyle{plain}


\newtheorem{thm}{Theorem}[section] 
\newtheorem*{thm*}{Theorem} 
\newtheorem{prop}[thm]{Proposition}
\newtheorem{lem}[thm]{Lemma}
\newtheorem{cor}[thm]{Corollary}

\theoremstyle{definition}
\newtheorem{definition}[thm]{Definition}
\newtheorem{expl}[thm]{Example}

\newtheorem{rem}[thm]{Remark}

\DeclareMathOperator{\Z}{\mathbb{Z}}

\DeclareMathOperator{\F}{\mathbb{F}}

\DeclareMathOperator{\Div}{{\rm Div}}

\DeclareMathOperator{\Tr}{{\rm Tr}}

    \DeclareFontFamily{U}{wncy}{}
    \DeclareFontShape{U}{wncy}{m}{n}{<->wncyr10}{}
    \DeclareSymbolFont{mcy}{U}{wncy}{m}{n}
    \DeclareMathSymbol{\Sha}{\mathord}{mcy}{"58}

\numberwithin{equation}{section}

\DeclareSymbolFont{bbold}{U}{bbold}{m}{n}
\DeclareSymbolFontAlphabet{\mathbbold}{bbold}


\usepackage{hyperref}

\newcommand{\Rad}{{\rm Rad}}
\newcommand{\s}{\text{ss}}

\newcommand{\rad}{{\rm rad}}

\begin{document}
\title{Isomorphic gcd-graphs over polynomial rings}
 \author{ J\'an Min\'a\v{c}, Tung T. Nguyen, Nguy$\tilde{\text{\^{e}}}$n Duy T\^{a}n }
\address{Department of Mathematics, Western University, London, Ontario, Canada N6A 5B7}
\email{minac@uwo.ca}
\dedicatory{Dedicated to Professor Ki-Bong Nam on the occasion of his 70th birthday }

 \address{Department of Mathematics and Computer Science, Lake Forest College, Lake Forest, Illinois, USA}
 \email{tnguyen@lakeforest.edu}
 
  \address{
Faculty Mathematics and 	Informatics, Hanoi University of Science and Technology, 1 Dai Co Viet Road, Hanoi, Vietnam } 
\email{tan.nguyenduy@hust.edu.vn}

\thanks{JM is partially supported by the Natural Sciences and Engineering Research Council of Canada (NSERC) grant R0370A01. He gratefully acknowledges the Western University Faculty of Science Distinguished Professorship 2020-2021. He is also grateful for the support of the Western Academy for Advanced Research in 2022-2023 and the current support of the Fields Institute for research in Mathematical Sciences.  TTN is partially supported by an AMS-Simons Travel Grant. NDT is partially supported by the Vietnam National Foundation for Science and Technology Development (NAFOSTED) under grant number 101.04-2023.21}
\keywords{Gcd-graphs, Function fields, Integral graphs, Isospectral graphs.}
\subjclass[2020]{Primary 05C25, 05C50, 05C51}
\maketitle

\begin{abstract}
Gcd-graphs over the ring of integers modulo $n$ are a simple and elegant class of integral graphs. The study of these graphs connects multiple areas of mathematics, including graph theory, number theory, and ring theory. In a recent work, inspired by the analogy between number fields and function fields, we define and study gcd-graphs over polynomial rings with coefficients in finite fields. We discover that, in both cases, gcd-graphs share many similar and analogous properties. In this article, we extend this line of research further. Among other topics, we explore an analog of a conjecture of So and a weaker version of Sander-Sander concerning the conditions under which two gcd-graphs are isomorphic or isospectral. We also provide several constructions showing that, unlike the case over $\mathbb{Z}$, it is not uncommon for two gcd-graphs over polynomial rings to be isomorphic. 
\end{abstract}

\tableofcontents

\section{Introduction}

From the time when Ren\'e Descartes introduced his Cartesian coordinate system in 1637, the interplay between geometry and algebra has become one of the most powerful tools in mathematics. This interplay is particularly powerful in algebraic graph theory, where it is natural to associate graphs with their adjacency matrices and study the spectra of these matrices, leading to the rich theory of graph spectra.

There are also other natural ways to associate graphs with algebraic structures. In fact, graphs defined over algebraic structures, particularly groups, have been extensively studied since the pioneering work of Cayley, who introduced what are now known as Cayley graphs (see \cite{cayley1878desiderata}). These graphs provide a natural framework for visualizing abstract algebraic structures: the elements of the underlying structure are represented by vertices, while a prescribed generating set determines the edges between them. Consequently, many graph-theoretic properties can be described in terms of the algebraic properties of the underlying structure. Conversely, various algebraic properties can be realized by the geometry of the associated graph. For example, a subset of a group generates the whole group if and only if the corresponding Cayley graph is connected. This simple but useful observation has been exploited in several settings. For instance, the authors of \cite{maimani2010rings} use connectivity properties of associated graphs to classify finite commutative rings that are generated by their units.

The study of Cayley graphs and related constructions also draws upon several areas of mathematics, including group theory, representation theory, Fourier analysis on finite groups, matrix theory, and algebraic geometry. Indeed, natural symmetries of groups and rings are reflected in the symmetries of their associated graphs. Conversely, it is often interesting to investigate how much of the underlying algebraic structure can be recovered from its associated graph. When the underlying algebraic structure is a finite ring, the associated graphs become particularly interesting because both the additive and multiplicative structures of the ring come into play. One important direction in this area is the study of gcd-graphs, developed in the foundational work of Klotz and Sander (see for example, \cite{unitary,anderson2021graphs,klotz2007some,nguyengcd2026}). Unlike Cayley graphs considered solely from the perspective of the additive group, graphs defined over rings can incorporate information from both ring operations. The interaction between the additive and multiplicative structures often gives rise to rich algebraic, combinatorial, and spectral properties.

Since the work of Klotz and Sander, the literature has seen an explosion of research exploring many further fundamental properties of these graphs, including their connectedness, bipartiteness, perfectness, clique and independence numbers, spectral properties, and much more. We refer readers to  \cite{unitary,bavsic2015polynomials, chudnovsky2024prime,kiani2012unitary} and the references therein for further discussions around this line of research. 

We first recall the definition of a gcd-graph.
\begin{definition}
    Let $A$ be a principal ideal domain and $n \in A$ which is not a unit. Suppose further that the ring $A/n$ is finite. Let $\Div(n)$ be the set of divisors of $n$ (defined up to associates) and $D \subset \Div(n)$ such that $n \not \in D.$ The gcd-graph $G_{n}(D)$ 
is the graph equipped with the following data: 
    \begin{enumerate}
    \item The vertex set of $G_n(D)$ is $A/n.$
    \item Two vertices $a,b \in A/n$ are adjacent if and only if $\gcd(a-b, n) \in D.$
\end{enumerate}
In other words, $G_n(D)$ is the Cayley graph on $A/n$ with the generating set 
\[ S_D = \{h \in A/n  \mid \gcd(h,n) \in D \}.\] 
\end{definition}
We remark that a gcd-graph is necessarily simple and undirected; that is, $a$ is adjacent to $b$ if and only if $b$ is adjacent to $a$. This follows from the identity
\[
\gcd(a-b,n)=\gcd(b-a,n).
\]
Throughout the paper, we assume that all graphs are simple and undirected.

The case $A = \Z$ is discussed in \cite{klotz2007some} and the case $A = \F_q[x]$ is the main topic of \cite{minavc2024gcd}.  By their own nature, we can see that the study of these gcd-graphs bridges several branches of mathematics including graph theory, number theory, commutative algebra, and character theory for finite groups. For example, when $A = \Z$, using the theory of Ramanujan sums, the authors of \cite{klotz2007some} show that gcd-graphs are integral, meaning all of their eigenvalues are integers. In \cite{so2006integral}, So shows that the converse is true as well: a $\Z/n$-circulant graph is integral if and only if it is a gcd-graph. So also poses a conjecture about whether a gcd-graph $G_{n}(D)$ determined $D$ and $n$ (see \cite[Conjecture 7.3]{so2006integral}). While this conjecture is still open, some progress has been made. For example, Sander and Sander in \cite{sander2015so} ask that for a given $n$ whether $D$ is determined by the spectral vector $(\sum_{d \in D} c(\ell, \frac{n}{d}))_{\ell = 1}^n$ where $c(\ell, \frac{n}{d})$ is the Ramanujan sum (see \cite[Section 4]{klotz2007some} for the definition). We remark that the components of this spectral vector are exactly the eigenvalues of $G_{n}(D)$ counted with multiplicity. In the same paper, Sander and Sander prove this conjecture. In \cite{schlage2021determinant}, Schlage-Puchta gives a new and shorter proof for the weak conjecture of Sander-Sander using a determinant involving Ramanujan sums.

Given what is already known in the case $A= \Z$, one may ask whether the conjecture of So and the weaker version of Sander-Sander hold in the case $A= \F_q[x].$ As we will see in this article, the answer is negative for the first question (though it fails for a good reason, see \cref{rem:same_degree})  and affirmative for the second question. In fact, for the first question, we will provide various constructions of isomorphic gcd-graphs for different $f$ and $D.$ One particular reason for this stark difference between the number case and the function field case is that over function fields, we can find $f \neq g$ such that $\F_q[x]/f \cong \F_q[x]/g$ whereas in the number field case, this is impossible. For the second question, we will show that the approach of \cite{schlage2021determinant} can be adapted naturally in the function field setting, leading to a proof of the weak conjecture of Sander-Sander in this case. 
\subsection{Outline.}
In \cref{sec:spectral_vector}, we prove the analog of the weaker conjecture of Sander-Sander in the function fields setting. More precisely, we show that for a fixed $f \in \F_q[x]$, $D$ is determined by a spectral vector describing all eigenvalues of $G_{f}(D)$. We achieve this by studying a matrix involving Ramanujan sums whose counterpart over $\Z$ was introduced in \cite{schlage2021determinant}. In \cref{sec:prime_powers-I}, we study the graph theoretic properties of gcd-graphs $G_{f}(D)$ where $f$ is a prime power and in \cref{sec:prime_powers-II} we study their spectral properties. 
In both sections, we show that many of the results concerning the spectrum of $G_{f}(D)$, as described in \cite{sander2018structural, sander2015so}, have analogs in the function field setting. Furthermore, we explore several fundamental graph-theoretic properties of $G_{f}(D)$ such as their bipartiteness, perfectness, clique, and independence numbers, which, to the best of our knowledge, have not yet been addressed in the literature—even for gcd-graphs over $\Z$. \cref{sec:isomorphic_gcd} studies isomorphism between gcd-graphs. This is where we will see differences between gcd-graphs over $\Z$ and gcd-graphs over $\F_q[x].$ By analyzing experimental data via the Python library NetworkX, we provide several constructions of isomorphic gcd-graphs. 
\subsection{Code}
Many insights in this paper are gained through an extensive analysis of experimental data. The code that we wrote to generate data and do experiments with it can be found at  \cite{Nguyen_isomorphic_gcd_graph}.  Additionally, we have verified all statements in this work with various concrete and computable examples.

\section{Spectral vector determines $D$} \label{sec:spectral_vector}
Let $q$ be a prime power, and $f \in \F_q[x]$ a monic polynomial. As we explain in \cite[Section 6]{minavc2024gcd}, there is a direct analogy between the character theory of $\Z/n$ and that of $\F_q[x]/f$. More specifically, while the character theory of $\Z/n$ can be fully described once we fix a primitive $n$-th root of unity, the character theory of $\F_q[x]/f$ is similarly determined by a fixed non-degenerate functional on $\F_q[x]/f$. As a result, the spectra of gcd-graphs in both cases have an explicit description via Ramanujan sums. For the case of gcd-graphs over $\Z$, we refer readers to \cite[Section 4]{klotz2007some}. For $\F_q[x]$, we will now recall the definition of the Ramanujan sums which describe the spectra of gcd-graphs.

\begin{definition} (see \cite[Section 6.2]{minavc2024gcd})
Let $f \in \F_q[x]$ be a monic polynomial. For each $g \in \F_q[x]$, the Ramanujan sum $c(g,f)$ is defined as follows. 
 \[ c(g,f) = c_{\psi}(g,f) = \sum_{a \in (\F_q[x]/f)^{\times}} \zeta_p^{\Tr(\psi(ga))}. \]
 Here $\psi: \F_q[x]/f \to \F_q$ is a non-degenerate functional on $\F_q[x]/f$ and $\Tr: \F_q \to \F_p$ is the trace map. 
\end{definition}
While the above definition looks rather complicated, we can show that $c(g,f)$ has a simple expression almost analogous to the case of Ramanujan sums over $\Z$. In particular, $c(g,f)$  does not depend on the choice of $\psi$ as long as we make sure that it is non-degenerate. More precisely, by \cite[Section 6.2]{minavc2024gcd} we have 
\begin{equation} \label{eq:formula_for_c}
c(g,f)  =  \mu(t) \dfrac{\varphi(f)}{\varphi(t)}, \quad \text{where} \quad t = \dfrac{f}{\gcd(f,g)}. 
\end{equation}

Here $\varphi(f)=|(\F_q[x]/f)^{\times}|$ is the analog of the classical Euler function and $\mu(t)$ is the analog of the classical Mobius function (see \cite{rosen2013number} for some further discussions about these functions).

The following definition is inspired by the work \cite{sander2015so} of Sander-Sander on gcd-graphs over $\Z.$
\begin{definition}
Let $\Div(f)$ be the set of monic divisors of $f$.    Let $D \subset \Div(f)$ such that $f \not \in D$. The spectral vector of the gcd-graph $G_{f}(D)$ is defined to be the following vector 
    \[ \vec{\lambda}(f,D) = \left(\lambda_{g}(f,D) \right)_{g \in \F_q[x]/f} \] 
    where 
    \begin{equation}  \label{eq:definition_of_lambda}
    \lambda_{g} (f,D) = \sum_{d \in D} c\left(g, \frac{f}{d} \right).
    \end{equation}
\end{definition}

By the main result in \cite[Section 6]{minavc2024gcd}, the components of this spectral vector are precisely the eigenvalues of $G_{f}(D)$ counted with multiplicity. The following theorem is a direct analog of \cite[Theorem 1.2]{sander2015so}.

\begin{thm} \label{thm:same_spectral_vector}
    Let $D_1, D_2$ be two proper subsets of $Div(f).$ Suppose that $G_{f}(D_1)$ and $G_{f}(D_2)$ have the same spectral vector. Then $D_1 = D_2.$
\end{thm}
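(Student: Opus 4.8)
The plan is to recast the statement as the injectivity of a single linear map and then to prove injectivity via a determinant computation, adapting the strategy of Schlage-Puchta \cite{schlage2021determinant} to $\F_q[x]$. First I would observe that, by \eqref{eq:formula_for_c}, the eigenvalue $\lambda_g(f,D)$ depends on $g$ only through $e:=\gcd(g,f)$: each summand $c(g,f/d)=\mu(t)\varphi(f/d)/\varphi(t)$ with $t=(f/d)/\gcd(f/d,g)$ is governed by $\gcd(f/d,g)$, and a prime-by-prime check shows $\gcd(f/d,g)=\gcd(f/d,e)$. Hence $G_f(D_1)$ and $G_f(D_2)$ share the same spectral vector if and only if $\lambda_e(f,D_1)=\lambda_e(f,D_2)$ for every monic divisor $e\mid f$, where $\lambda_e$ is the common value of $\lambda_g$ on $\{g:\gcd(g,f)=e\}$. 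Writing $\chi_{D}\in\{0,1\}^{\Div(f)}$ for the indicator vector of $D$, the equality of spectral vectors becomes $A(\chi_{D_1}-\chi_{D_2})=0$, where $A=(a_{e,d})$ has rows indexed by all monic divisors $e\mid f$, columns indexed by the proper divisors $d\in\Div(f)$, and $a_{e,d}=c(g_e,f/d)$ for any $g_e$ with $\gcd(g_e,f)=e$. It thus suffices to show the columns of $A$ are linearly independent, which follows once the square matrix $\widetilde A$ obtained by adjoining the column $d=f$ (whose entries are all $c(g_e,1)=1$) is shown to have nonzero determinant.

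Second, I would exploit multiplicativity to factor $\widetilde A$. Writing $f=\prod_i P_i^{a_i}$ as a product of powers of distinct monic irreducibles, the Chinese Remainder Theorem identifies $\Div(f)$ with $\prod_i\{P_i^{0},\dots,P_i^{a_i}\}$, and \eqref{eq:formula_for_c} shows $c(g,\cdot)$ is multiplicative in its second argument, so $c(g,f/d)$ factors as the product of its contributions at the $P_i$. Consequently $\widetilde A=\bigotimes_i \widetilde A^{(P_i)}$ is a Kronecker product of the local matrices, and since the determinant of a Kronecker product is a product of powers of the factors' determinants, $\det\widetilde A\neq0$ provided $\det\widetilde A^{(P)}\neq0$ for each prime power $P^{a}\parallel f$. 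This reduces the whole theorem to the prime-power case.

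Finally, I would compute the local determinant. Indexing rows of $\widetilde A^{(P)}$ by $i$ (the $P$-valuation of $e$) and columns by $b$ (the $P$-valuation of $d$), formula \eqref{eq:formula_for_c} gives, with $Q:=|P|=q^{\deg P}$,
\[
\widetilde A^{(P)}_{i,b}=
\begin{cases}
\varphi(P^{a-b}), & i\ge a-b,\\
-\,Q^{\,a-b-1}, & i=a-b-1,\\
0, & i\le a-b-2.
\end{cases}
\]
After reversing the column order this is a lower-Hessenberg matrix with nonvanishing superdiagonal, and a short induction (or direct row reduction) evaluates its determinant as $\pm Q^{\binom{a+1}{2}}\neq0$. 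This local evaluation is the only genuine computation and is where I expect the main effort to lie; once it is in hand, the Kronecker factorization yields $\det\widetilde A\neq0$, so the columns of $A$ are independent, forcing $\chi_{D_1}=\chi_{D_2}$ and hence $D_1=D_2$. An alternative to the explicit Hessenberg evaluation would be to factor each $\widetilde A^{(P)}$ as a diagonal matrix of Euler-function values times a unitriangular Möbius matrix via the divisor-sum identity for $c(g,\cdot)$, which would make the nonvanishing transparent.
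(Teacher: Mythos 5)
Your proposal is correct and follows the same overall strategy as the paper: both reduce the theorem to the invertibility of the square matrix of Ramanujan sums $C_f=\bigl(c(g,h)\bigr)_{g,h\mid f}$ (your $\widetilde A$ is this matrix up to reindexing the columns by $d\mapsto f/d$), exactly as in Schlage-Puchta's treatment of the $\mathbb{Z}$ case. The difference lies in how the determinant is evaluated. The paper proves $\det(C_f)=\pm|f|^{\tau(f)/2}$ by induction on the number of distinct irreducible factors: it writes $C_{P^nf'}$ as an $(n+1)\times(n+1)$ block matrix whose blocks are scalar multiples of $C_{f'}$ (computed via the three-case formula for $c(P^id_k,P^jd_l)$, which is your Lemma on $c(P^m,P^k)$ in disguise) and then row-reduces using $\varphi(P^n)+|P|^{n-1}=|P|^n$. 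You instead invoke multiplicativity of $c(g,\cdot)$ in its second argument together with the CRT splitting of divisors to factor $\widetilde A$ as a Kronecker product $\bigotimes_i\widetilde A^{(P_i)}$, reducing everything to the single-prime-power determinant $\pm|P|^{\binom{a+1}{2}}$, which you evaluate by the same row subtraction (row $i=a$ minus row $i=a-1$ leaves $(0,\dots,0,|P|^a)$, giving the recursion $\det_a=\pm|P|^a\det_{a-1}$). The two computations are arithmetically identical — the paper's block row operations are the Kronecker factorization carried out by hand — but your organization cleanly isolates the only genuine calculation in the local matrix, at the small cost of having to state precisely that the entries $c(g_e,f/d)$ factor over the primes (which does hold, since $\gcd$ and $\varphi,\mu$ all split over coprime parts). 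Your local answer $\pm|P|^{\binom{a+1}{2}}=\pm|P^a|^{\tau(P^a)/2}$ agrees with the paper's closed form, and the final step (injectivity forces the indicator vectors of $D_1$ and $D_2$ to coincide) is the same in both.
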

To prove \cref{thm:same_spectral_vector}, we adapt the strategy employed in \cite{schlage2021determinant} which deals with gcd-graphs over $\Z$. More precisely, we will introduce and study a matrix similar to the one defined in \cite{schlage2021determinant}.  Let $g,h \in \F_q[x]$ and $c(g,h)$ the Ramanujan sum. Let $C_f = (c(g,h))_{g,h \in \Div(f)}.$  In \cite[Theorem 1]{schlage2021determinant}, the author solves the weak conjecture of Sander-Sander by showing that the determinant of $C_n$ is not zero. In the case of $\F_q[x]$, we have a similar statement.

\begin{prop} 
Let $|f|$ be the norm of $f$; i.e., $|f|$ is the order of the finite ring $\F_q[x]/f.$ Then 
\[ \det(C_f)= \pm |f|^{\frac{\tau(f)}{2}} .\] 
Here $\tau(f)$ is the number of monic divisors of $f.$ In particular, $\det(C_f) \neq 0.$
\end{prop}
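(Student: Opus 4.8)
The plan is to exhibit a triangular factorization of $C_f$ that reduces its determinant to a product over the monic divisors of $f$. The starting point is a von Sterneck type formula for the Ramanujan sum, namely
\[ c(g,h)=\sum_{d\mid\gcd(g,h)}|d|\,\mu\!\left(\tfrac{h}{d}\right), \]
valid for monic $h$ and arbitrary $g$, the sum running over monic divisors $d$ of $\gcd(g,h)$. I would deduce this from \eqref{eq:formula_for_c} by M\"obius inversion: setting $\eta(g,h)=\sum_{a\in\F_q[x]/h}\zeta_p^{\Tr(\psi(ga))}$, the non-degeneracy of $\psi$ gives $\eta(g,h)=|h|$ when $h\mid g$ and $\eta(g,h)=0$ otherwise, while grouping the summands according to $\gcd(a,h)$ yields $\eta(g,h)=\sum_{d\mid h}c(g,d)$; inverting this relation and using $\eta(g,d)=|d|\,[d\mid g]$ produces the displayed identity. (Alternatively one checks the identity directly against \eqref{eq:formula_for_c}, using the multiplicativity of $\mu$ and $\varphi$.)

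With this identity in hand, I would order the monic divisors of $f$ as $d_1\prec\cdots\prec d_{\tau(f)}$ by any total order refining divisibility, and introduce the $\tau(f)\times\tau(f)$ matrices $Z=\big([d_j\mid d_i]\big)_{i,j}$, $\Lambda=\diag(|d_1|,\dots,|d_{\tau(f)}|)$, and $M=\big([d_i\mid d_j]\,\mu(d_j/d_i)\big)_{i,j}$, where $[\,\cdot\,]$ denotes the indicator of the bracketed condition. The von Sterneck identity says exactly that $C_f=Z\,\Lambda\,M$, since $(Z\Lambda M)_{ik}=\sum_{d_j\mid\gcd(d_i,d_k)}|d_j|\,\mu(d_k/d_j)=c(d_i,d_k)$. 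Because the order refines divisibility, $d_j\mid d_i$ forces $j\le i$, so $Z$ is lower triangular; likewise $M$ is upper triangular; and both have every diagonal entry equal to $[d_i\mid d_i]=\mu(1)=1$. Hence $\det Z=\det M=1$ and
\[ \det C_f=\det\Lambda=\prod_{d\mid f}|d|. \]

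It remains to evaluate this product, which I would do by the standard pairing $d\leftrightarrow f/d$: since $\deg d+\deg(f/d)=\deg f$ and there are $\tau(f)$ monic divisors, one gets $\sum_{d\mid f}\deg d=\tfrac12\tau(f)\deg f$, whence $\prod_{d\mid f}|d|=q^{\sum_{d\mid f}\deg d}=|f|^{\tau(f)/2}$. This yields $\det C_f=|f|^{\tau(f)/2}$, which is nonzero and in fact pins the sign down to $+1$. The one step carrying genuine content is the von Sterneck identity; everything after it is bookkeeping with triangular matrices, so that is where I expect the main work to lie. As an alternative that sidesteps that identity and uses only \eqref{eq:formula_for_c}, one may note that $c(g,h_1h_2)=c(g,h_1)c(g,h_2)$ for coprime moduli, so that under the CRT factorization of the divisor lattice $C_f$ becomes a Kronecker product over the prime-power components of $f$; the problem then reduces to $f=p^e$, where $C_{p^e}$ is lower Hessenberg and the elementary operations $R_i\mapsto R_i-R_{i-1}$ triangularize it, giving $\prod_{i=0}^{e}|p|^i=|p^e|^{(e+1)/2}$, consistent with the multiplicative formula.
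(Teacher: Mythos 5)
Your proof is correct, but it follows a genuinely different route from the paper's. The paper argues by induction on the number of distinct irreducible factors of $f$: it lists the divisors of $P^n f$ in blocks, computes each block $C_{ij}$ of $C_{P^n f}$ as a scalar multiple of $C_f$ using \eqref{eq:formula_for_c}, and then performs block row operations to obtain $\det(C_{P^nf})=\pm |P|^{\frac{n(n+1)}{2}\tau(f)}(\det C_f)^{n+1}$, which closes the induction but only up to sign. You instead prove the von Sterneck identity $c(g,h)=\sum_{d\mid\gcd(g,h)}|d|\,\mu(h/d)$ (a routine consequence of \eqref{eq:formula_for_c} by multiplicativity, as you note; the character-sum derivation needs a small remark about which non-degenerate functional is used for each modulus $h\mid f$, so the direct check is the cleaner justification) and read it off as a unitriangular--diagonal--unitriangular factorization $C_f=Z\Lambda M$, giving $\det C_f=\prod_{d\mid f}|d|=|f|^{\tau(f)/2}$ in one stroke. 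This is the classical Smith-determinant argument for gcd matrices, and it buys you two things the paper's proof does not: no induction, and an exact sign, namely $\det C_f=+|f|^{\tau(f)/2}$ whenever rows and columns carry the same ordering (under simultaneous reordering the determinant is invariant, so the paper's $\pm$ and its remark about order-dependence are superfluous in that reading). Your alternative via the Kronecker factorization $C_{f_1f_2}=C_{f_1}\otimes C_{f_2}$ for coprime $f_1,f_2$ and the Hessenberg reduction of $C_{P^e}$ is also valid and is the closest in spirit to the paper's block row operations, though the paper never isolates the tensor-product structure. Either way, the application to \cref{thm:same_spectral_vector} only needs $\det C_f\neq 0$, which both arguments deliver.
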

We remark that the sign of $\det(C_f)$ depends on the ordering of $\Div(f).$ For our application, this sign is not important. 

\begin{proof} We proceed by induction on the number of monic irreducible factors of $f$. If $\deg f=0$, then $f=1$ and $\det(C_1)=1$. Now let $\deg f>0$ and $P$ a monic irreducible polynomial coprime to $f$. Let $n$ be a positive integer. Let $f_1,f_2,\ldots, f_r$ be the monic divisors of $f$, here $r=\tau(f)$. We list the monic divisors of $P^nf$ as
\[
f_1,f_2,\ldots,f_r,Pf_1,Pf_2,\ldots,Pf_r, \ldots,P^2f_1,P^2f_2,\ldots,P^2f_r,\ldots, P^nf_1,P^nf_2,\ldots,P^nf_r.
\]
Then $C_{P^nf}$ is an $(n+1)\times(n+1)$-block matrix
\[
\begin{bmatrix}
C_{00}&C_{01}&\cdots&C_{0n}\\
C_{10}&C_{11}&\cdots&C_{1n}\\
\vdots&\vdots&\cdots&\vdots\\
C_{n0}&C_{n1}&\cdots&C_{nn}
\end{bmatrix},
\]
where $C_{ij}$ is a $r\times r$-matrix whose $(k,l)$-entry is
\[
(C_{ij})_{kl}=c(P^id_k,P^jd_l).
\]
Clearly $C_{00}=C_f$.   If $j\geq i+2$ then
\[
c(P^id_k,P^jd_l)=\dfrac{\varphi(P^jd_l)}{\varphi\left(\dfrac{P^jd_l}{\gcd(P^id_k,P^jd_l)}\right)}\mu\left(\dfrac{P^jd_l}{\gcd(P^id_k,P^jd_l)}\right)=0.
\]
This is because $\dfrac{P^jd_l}{\gcd(P^id_k,P^jd_l)}$ is divisible by $P^2$.

If $j=i+1$ then
\[
\begin{aligned}
c(P^id_k,P^jd_l)&=\dfrac{\varphi(P^jd_l)}{\varphi\left(\dfrac{P^jd_l}{\gcd(P^id_k,P^jd_l)}\right)}\mu\left(\dfrac{P^jd_l}{\gcd(P^id_k,P^jd_l)}\right)\\
&=\dfrac{\varphi(P^{i+1})}{\varphi(P)} \dfrac{\varphi(d_l)}{\varphi\left(\dfrac{d_l}{\gcd(d_k,d_l)}\right)}\mu(P)\mu\left(\dfrac{d_l}{\gcd(d_k,d_l)}\right)\\
&=-|P|^i \dfrac{\varphi(d_l)}{\varphi\left(\dfrac{d_l}{\gcd(d_k,d_l)}\right)}\mu\left(\dfrac{d_l}{\gcd(d_k,d_l)}\right)=-|P|^ic(d_k,d_l),
\end{aligned}
\]
and hence $C_{i,i+1}=-|P|^iC_f.$ 

If $j\leq i$ then 
\[
\begin{aligned}
c(P^id_k,P^jd_l)&=\dfrac{\varphi(P^jd_l)}{\varphi\left(\dfrac{P^jd_l}{\gcd(P^id_k,P^jd_l)}\right)}\mu\left(\dfrac{P^jd_l}{\gcd(P^id_k,P^jd_l)}\right)\\
&=\dfrac{\varphi(P^j)\varphi(d_l)}{\varphi\left(\dfrac{d_l}{\gcd(d_k,d_l)}\right)}
\mu\left(\dfrac{d_l}{\gcd(d_k,d_l)}\right)
=\varphi(P^j)c(d_k,d_l),
\end{aligned}
\]
and hence $C_{ij}=\varphi(P^j)C_f$. Thus
\[
C_{P^nf}= \begin{bmatrix}
C_f&-C_f&0&0&\cdots&0\\
C_f&\varphi(P)C_f&-|P|C_f&0&\cdots&0\\
C_f&\varphi(P)C_f&\varphi(P^2)C_f&-|P|^2C_f&\cdots&0\\
\vdots&\vdots&\cdots&\vdots\\
C_f&\varphi(P)C_f&\varphi(P^2)C_f&\varphi(P^3)C_f&\cdots&-|P|^{n-1}C_f\\
C_f&\varphi(P)C_f&\varphi(P^2)C_f&\varphi(P^3)C_f&\cdots&\varphi(P^n)C_f
\end{bmatrix}.
\]
Subtracting the $n$th row from the $(n+1)$-st row and noting that $\varphi(P^n)+|P|^{n-1}=|P^n|$, we get
\[
\begin{aligned}
\det (C_{P^nf})&=\pm \det\begin{bmatrix}
C_f&-C_f&0&0&\cdots&0\\
C_f&\varphi(P)C_f&-|P|C_f&0&\cdots&0\\
C_f&\varphi(P)C_f&\varphi(P^2)C_f&-|P|^2C_f&\cdots&0\\
\vdots&\vdots&\cdots&\vdots\\
C_f&\varphi(P)C_f&\varphi(P^2)C_f&\varphi(P^3)C_f&\cdots&-|P|^{n-1}C_f\\
0&0&0&0&\cdots&|P|^nC_f&
\end{bmatrix}.
\end{aligned}\]
Expanding along the last block row gives
\[
\begin{aligned}
\det (C_{P^nf})&=\pm |P|^{rn}\det(C_f)\det\begin{bmatrix}
C_f&-C_f&0&0&\cdots&0\\
C_f&\varphi(P)C_f&-|P|C_f&0&\cdots&0\\
C_f&\varphi(P)C_f&\varphi(P^2)C_f&-|P|^2C_f&\cdots&0\\
\vdots&\vdots&\cdots&\vdots\\
C_f&\varphi(P)C_f&\varphi(P^2)C_f&\varphi(P^3)C_f&\cdots&\varphi(P^{n-1})C_f
\end{bmatrix}.
\end{aligned}
\]
Repeating this argument, we obtain
\[
\begin{aligned}    
\det (C_{P^nf}) &=\pm |P|^{rn}\det(C_f) |P|^{r(n-1)}\det(C_f)\cdots|P^r|\det(C_f)\\
&=\pm |P|^{\frac{n(n+1)}{2}\tau(f)}(\det C_f)^{n+1}.
\end{aligned}
\]
By the induction hypothesis
\[
\det (C_{P^nf})=\pm |P|^{\frac{n(n+1)}{2}\tau(f)}(|f|^{\frac{\tau(f)}{2}})^{n+1}=\pm(|P|^n|f|)^{\frac{(n+1)\tau(f)}{2}}=\pm|P^nf|^{\frac{\tau(P^nf)}{2}},
\]
and we are done.
\end{proof}

We can now give a proof of \cref{thm:same_spectral_vector}. 

\begin{proof}
    Assume that $G_{f}(D_1)$ and $G_{f}(D_2)$ have the same spectral vector $\vec{\lambda}(f,D_1) = \vec{\lambda}(f, D_2)$. Then, the subvectors $(\lambda_{g} (f,D_1))_{g \mid f}$ and $(\lambda_{g} (f,D_2))_{g \mid f}$ indexed by divisors of $f$ are also equal. For each $i \in \{1, 2\}$, we define the following  indicator vector of size $|\tau(f)| \times 1$ 
\[
  (v_i)_{h} = 
\begin{cases}
    1 & \qquad\text{if } f/h \in D_i, \\
    0 & \qquad \text{if } f/h \not \in D_i. \\
\end{cases}\]         
We then see that the vectors $(\lambda_{g} (f,D_1))_{g \mid f}$ (respectively $(\lambda_{g} (f,D_2))_{g \mid f}$) are precisely $C_f v_1$ (respectively $C_f v_2$). Since $C_f$ is invertible, we conclude that $v_1 = v_2$ and therefore, $D_1 = D_2.$

\end{proof}
\section{Gcd-graphs associated with prime powers I: \\ Graph theoretic properties of $G_{P^k}(D)$} \label{sec:prime_powers-I}

In this section, we study the gcd-graphs $G_{f}(D)$ in the case where $f$ is a prime power; i.e., $f=P^k$ where $P \in \F_q[x]$ is an irreducible polynomial and $k \geq 1.$  In this case, a subset $D$ of $\Div(P^k)$ can be written uniquely in the following form $D = \{P^{k_1}, P^{k_2}, \ldots, P^{k_s}\}$ where 
\[ 0 \leq k_1 <k_2 < \cdots < k_s <k. \] 
We will fix this notation throughout this section. 

We first discuss some fundamental graph-theoretic properties of $G_{P^k}(D)$. Recall that for each $f \in \F_q[x]$, $|f| = q^{\deg(f)}$ is the order of the finite ring $\F_q[x]/f$.

\begin{prop} \label{prop:connected_prime_power}
    $G_{P^k}(D)$ has exactly $|P|^{k_1}$ connected components, and each component is isomorphic to $G_{P^{k-k_1}}(D')$ where 
    \[ D' = \{1, P^{k_2-k_1}, \ldots, P^{k_s-k_1} \} .\]
    In particular, $G_{P^k}(D)$ is connected if and only if $k_1=0.$
\end{prop}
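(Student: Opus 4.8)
The plan is to exploit that $G_{P^k}(D)$ is a Cayley graph on the additive group $\F_q[x]/P^k$ with the symmetric connection set $S_D$. For any Cayley graph on an abelian group with a symmetric connection set, the connected component of $0$ is precisely the subgroup $H = \langle S_D \rangle$ generated additively by $S_D$, the other components are the cosets of $H$, and all components are isomorphic to the induced graph on $H$ since translation by a coset representative is a graph automorphism (vertex-transitivity). Thus the proposition reduces to two tasks: identifying $H$, and identifying the graph on $H$.

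First I would determine $H$. Since $f = P^k$ is a prime power, for $h \neq 0$ we have $\gcd(h, P^k) = P^{v_P(h)}$ where $v_P$ denotes the $P$-adic valuation, so $h \in S_D$ exactly when $v_P(h) \in \{k_1, \dots, k_s\}$. In particular every element of $S_D$ lies in the ideal $(P^{k_1})/(P^k)$, giving $H \subseteq (P^{k_1})/(P^k)$. For the reverse inclusion, I note that $S_D$ contains all elements of valuation exactly $k_1$, namely the $P^{k_1}u$ with $u$ a unit. The key step is the elementary lemma that the units of $\F_q[x]/P^{k-k_1}$ additively generate the whole ring: any $a$ with $P \mid a$ can be written $a = (a+1) - 1$, a difference of two units, while units lie trivially in their own span. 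Multiplying by $P^{k_1}$, the elements of valuation exactly $k_1$ then additively generate all of $(P^{k_1})/(P^k)$, so $H = (P^{k_1})/(P^k)$.

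With $H$ in hand the count is immediate: the number of components is the index $[\F_q[x]/P^k : H]$, and since $|\F_q[x]/P^k| = |P|^k$ while $|H| = |P|^{k-k_1}$, this index equals $|P|^{k_1}$. To identify a component, I would use the additive isomorphism $\theta : \F_q[x]/P^{k-k_1} \to (P^{k_1})/(P^k)$ given by $g \mapsto P^{k_1}g$. Under $\theta$, an element $g$ satisfies $v_P(P^{k_1}g) \in \{k_1, \dots, k_s\}$ iff $v_P(g) \in \{0, k_2 - k_1, \dots, k_s - k_1\}$, i.e. iff $\gcd(g, P^{k-k_1}) \in \{1, P^{k_2-k_1}, \dots, P^{k_s-k_1}\} = D'$. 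Hence $\theta$ carries $S_{D'}$ bijectively onto $S_D$, so it is a graph isomorphism from $G_{P^{k-k_1}}(D')$ onto the component of $0$. The final claim follows since $|P| = q^{\deg P} > 1$, so the number of components $|P|^{k_1}$ equals $1$ precisely when $k_1 = 0$.

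I expect the only nontrivial point to be the reverse inclusion $H \supseteq (P^{k_1})/(P^k)$, which rests on the lemma that units additively span $\F_q[x]/P^{k-k_1}$; the $a = (a+1) - 1$ trick handles this uniformly over every finite field $\F_q$, including $\F_2$, so no case distinction is required.
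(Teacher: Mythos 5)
Your proof is correct and follows essentially the same route as the paper: both decompose the vertex set into the cosets of the ideal $(P^{k_1})/(P^k)$, observe there are no edges between distinct cosets, and identify each coset with $G_{P^{k-k_1}}(D')$ via scaling by $P^{k_1}$. The only difference is that the paper outsources the coset-isomorphism and the connectedness of $G_{P^{k-k_1}}(D')$ to two results of the companion paper \cite{minavc2024gcd}, whereas you prove both from scratch (the connectedness via the observation that units additively generate $\F_q[x]/P^{k-k_1}$, using $a=(a+1)-1$), which makes your argument self-contained but not substantively different.
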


\begin{proof}
    By definition, the cosets $\{g + P^{k_1} (\F_q[x]/P^k)\}$ where $g$ runs over $\F_q[x]/P^{k_1}$ are mutually unconnected. Furthermore, by \cite[Lemma 5.4]{minavc2024gcd}, each of these cosets is isomorphic to $G_{P^{k-k_1}}(D')$ which is connected by \cite[Corollary 3.4]{minavc2024gcd}.
\end{proof}

\begin{prop}
   Suppose that $G_{P^k}(D)$ is connected. Then, $G_{P^k}(D)$ is a bipartite graph if and only if the following conditions hold. 

    \begin{enumerate}
        \item $\F_q = \F_2.$
        \item $\deg(P)=1$; namely either $P=x$ or $P=x+1.$
        \item $D = \{1 \}.$
    \end{enumerate}
\end{prop}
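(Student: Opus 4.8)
The plan is to use the standard criterion for bipartiteness of Cayley graphs on abelian groups: a connected Cayley graph $\mathrm{Cay}(G,S)$ with $S=-S$ and $0\notin S$ is bipartite if and only if there is a subgroup $H\le G$ of index $2$ with $S\cap H=\emptyset$, equivalently a group homomorphism $\phi\colon G\to\Z/2$ with $\phi(s)=1$ for every $s\in S$. Since every connected component of a Cayley graph is isomorphic to the connected Cayley graph on $\langle S\rangle$, the graph is bipartite iff a single component is. So first I would invoke \cref{prop:connected_prime_power} to replace $G_{P^k}(D)$ by one of its components, which is isomorphic to $G_{P^{k-k_1}}(D')$ with $D'=\{1,P^{k_2-k_1},\dots,P^{k_s-k_1}\}$; this reduces everything to the connected case $1\in D'$. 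I may thus assume $1\in D$, so that the connection set $S_D$ contains the full unit group $U=(\F_q[x]/P^k)^\times$.

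For the ($\Leftarrow$) direction, assume $\F_q=\F_2$, $\deg P=1$ and $D=\{1\}$. Writing $\alpha$ for the root of $P$, evaluation $\phi_0\colon h\mapsto h(\alpha)$ is a well-defined surjective additive homomorphism $\F_2[x]/P^k\to\F_2$, and $h$ is a unit exactly when $\phi_0(h)=1$. Hence $S_D=U=\{\phi_0=1\}$ is disjoint from the index-$2$ subgroup $\ker\phi_0$, and the graph is bipartite.

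For the ($\Rightarrow$) direction I would extract the three conditions in turn from the existence of the separating functional $\phi$. First, the additive group $\F_q[x]/P^k$ is an elementary abelian $p$-group with $p=\mathrm{char}\,\F_q$, so any of its subgroups is a $p$-group and admits an index-$2$ subgroup only if $p=2$; since $1\in D$ forces $S_D\neq\emptyset$, bipartiteness already gives $\mathrm{char}\,\F_q=2$. Next, to force $\F_q=\F_2$, I would use that all nonzero constants lie in $U\subseteq S_D$: if $q\ge 4$, pick $\alpha\in\F_q^\times$ with $\alpha\neq 1$; then $1$, $\alpha$ and $1+\alpha$ are all nonzero constants, hence all in $S_D$, so $\phi$ must equal $1$ on each, contradicting additivity $\phi(1)+\phi(\alpha)=\phi(1+\alpha)$ over $\Z/2$. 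Finally, with $\F_q=\F_2$ fixed I would run a counting argument on $|U|$: the prime-power formula $\varphi(P^k)=|P|^{k}-|P|^{k-1}$ together with $U\subseteq\{\phi=1\}$ and $|\{\phi=1\}|=|G|/2$ gives $|P|^{k}-|P|^{k-1}\le |P|^k/2$, i.e. $|P|\le 2$, hence $\deg P=1$. Once $\deg P=1$, $U$ fills the hyperplane $\{\phi_0=1\}$ completely, so $\phi_0$ is the unique functional equal to $1$ on $U$; any $P^{k_i}\in D$ with $k_i\ge 1$ would then be a non-unit in $S_D$ with $\phi_0(P^{k_i})=0$, contradicting $\phi=\phi_0$. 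This forces $D'=\{1\}$, yielding condition (3).

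I expect the main obstacle to be this last step—pinning down $\deg P=1$ and $D=\{1\}$ at once—since it requires combining the cardinality bound on the unit group with the uniqueness of the separating functional $\phi_0$, whereas the $\mathrm{char}=2$ and $\F_q=\F_2$ reductions are comparatively routine. The one point to handle carefully is the reduction via \cref{prop:connected_prime_power}: bipartiteness of the (possibly disconnected) graph is equivalent to that of a single component, and it is on this normalized connected graph that the clean conclusion $D'=\{1\}$ emerges.
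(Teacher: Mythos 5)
Your route is genuinely different from the paper's. The paper disposes of both directions in a few lines by citing the general bipartiteness criteria for gcd-graphs over $\F_q[x]$ from its predecessor (Corollary 4.2 and Theorem 4.3 of \cite{minavac2024gcd}, applied directly to the full, possibly disconnected graph); your argument is self-contained via the index-$2$ subgroup criterion, and its individual steps all check out. In particular the chain --- characteristic $2$ from the existence of an index-$2$ subgroup of an elementary abelian $p$-group, $\F_q=\F_2$ from additivity of $\phi$ on the nonzero constants $1,\alpha,1+\alpha$, and $\deg P=1$ from $\varphi(P^k)=|P|^k\bigl(1-\tfrac{1}{|P|}\bigr)\le |P|^k/2$ --- is clean and arguably more informative than the citation. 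Your converse direction via the evaluation functional $\phi_0(h)=h(\alpha)$ recovers exactly the explicit bipartition $A_0=\{h: P\mid h\}$, $A_1=\{h: P\nmid h\}$ that the paper writes down.

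The one genuine gap is the final sentence of your ``only if'' direction: after passing to a connected component you prove $D'=\{1\}$, which for the original graph only says $D=\{P^{k_1}\}$ is a singleton, not $D=\{1\}$. This is less a repairable slip in your write-up than a mismatch with the statement: your (correct) argument shows that the proposition as literally stated fails for disconnected graphs. Concretely, take $q=2$, $P=x$, $k=2$, $D=\{x\}$: then $S_D=\{x\}$ and $G_{x^2}(\{x\})$ is a perfect matching on four vertices, hence bipartite, while $D\neq\{1\}$ (the same degeneracy occurs for $D=\emptyset$). So either the proposition should carry the standing hypothesis $1\in D$ --- equivalently connectedness, by \cref{prop:connected_prime_power} --- in which case your proof is complete once the component reduction becomes vacuous, or condition (3) should read ``$D=\{P^{k_1}\}$ for some $0\le k_1<k$.'' You half-noticed this in your closing remark that the reduction is the delicate point; you should make the discrepancy explicit rather than asserting that $D'=\{1\}$ ``yields condition (3),'' which it does not.
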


\begin{proof}
    Suppose that $G_{P^k}(D)$ is a bipartite graph. By \cite[Corollary 4.2]{minavc2024gcd}, we must have $\F_q=\F_2$ and $\gcd(P^k, x(x+1)) \neq 1.$ Since $P$ is irreducible, we conclude that either $P=x$ or $P=x+1.$ By \cite[Theorem 4.3]{minavc2024gcd}, we also know that for each $1 \leq i \leq s$, $\gcd(P^k, x(x+1)) \nmid P^{k_i}.$ This happens only if $k_i=0$; or equivalently $D= \{1 \}.$ Conversely, if all of the above conditions are satisfied, then by \cite[Theorem 4.3]{minavc2024gcd} $G_{P^k}(D)$ is bipartite. In this case, we can in fact show a concrete bipartite partition of $G_{P^k}(D)$ as follows: 
    \[ V(G_{P^k}(D)) = A_0 \bigsqcup A_1, \]
    where 
    \[ A_0 = \{ h \in \F_q[x]/P^k \text{ such that } P \mid h \}, \]
    and 
     \[ A_1 = \{ h \in \F_q[x]/P^k \text{ such that } P \nmid h \}. \qedhere \]
\end{proof}

We now discuss the decomposition of $G_{P^k}(D)$ into the wreath product (also known as the lexicographic product) of simpler graphs. First, we recall the definition of the wreath product and homogeneous sets (see \cite[Section 2]{hammack2011handbook}).

\begin{definition} \label{def:wreath_product}
  Let $\Gamma,\Delta$ be two graphs. The wreath product  of $\Gamma $ and $ \Delta$ is the graph $\Gamma * \Delta$ equipped with the following data 
  \begin{enumerate}
      \item The vertex set of $\Gamma * \Delta$ is the Cartesian product $V(\Gamma ) \times V(\Delta )$,
      \item $(x,y)$ and $(x',y')$ are adjacent in $\Gamma * \Delta$ if either $(x,x') \in E(\Gamma)$ or $x=x'$ and $(y,y') \in E(\Delta)$. 
  \end{enumerate}
\end{definition}

\begin{definition}
Let $G$ be a graph. A homogeneous set in  $G$ is a set $X$ of vertices of $G$ such that every vertex in $V(G) \setminus X$ is adjacent to either all or none of the vertices in $X$. A homogeneous set $X$ is said to be non-trivial if $2 \leq |X| < |V(G)|$.
\end{definition}

As shown in \cite[Section 3]{chudnovsky2024prime}, the existence of a homogeneous set in a Cayley graph is almost equivalent to the existence of a decomposition of $G$ into a wreath product. In \cite[Section 5]{minavc2024gcd}, we describe the necessary and sufficient conditions for the existence of homogeneous sets in the gcd-graphs $G_{f}(D)$ over $\F_q[x].$ When $f=P^k$, the situation is relatively simple. In fact, by \cite[Theorem 5.5]{minavc2024gcd}, we have the following. 

\begin{thm} \label{thm:wreath_product_prime_power}
Suppose that $k \geq 2.$    Let $I = \langle P^{k-1} \rangle$ be the ideal of $\F_q[x]/P^k$ generated by $P^{k-1}$. Then $I$ is a homogeneous set in $G_{P^k}(D).$ Furthermore 

\[
G_{P^k}(D) \cong 
\begin{cases}
    G_{P^{k-1}}(D_1) * K_{|P|} & \qquad\text{if } P^{k-1} \in D, \\
    G_{P^{k-1}}(D_1) * E_{|P|} & \qquad \text{if } P^{k-1} \not \in D. \\
\end{cases}\] 
Here $D_1 = D \setminus \{P^{k-1}\}$, $*$ is the wreath product, $K_m$ (respectively $E_m$) is the complete graph (respectively the co-complete graph) on $m$ nodes. 
\end{thm}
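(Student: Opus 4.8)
The plan is to establish \cref{thm:wreath_product_prime_power} by first verifying that $I = \langle P^{k-1}\rangle$ is a homogeneous set, and then identifying the quotient structure explicitly. The ideal $I$ consists of the $|P|$ multiples of $P^{k-1}$ in $\F_q[x]/P^k$, that is, elements of the form $c P^{k-1}$ with $c$ ranging over a set of representatives for $\F_q[x]/P$. For the homogeneity claim, I would take two vertices $a, b$ in the same coset $v + I$, so that $a - b \in I$, hence $a - b$ is a multiple of $P^{k-1}$. For any vertex $w \notin v+I$, observe that $\gcd(w - a, P^k)$ and $\gcd(w - b, P^k)$ differ by an element of $I$ in their arguments; since $w - a$ and $w - b$ are congruent modulo $P^{k-1}$ and neither is divisible by $P^{k-1}$ more highly than the other (as $w \notin v+I$ forces $P^{k-1} \nmid w-a$), the gcd with $P^k$ is unchanged. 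Thus $w$ is adjacent to $a$ iff it is adjacent to $b$, giving homogeneity. As the excerpt notes, this already follows from \cite[Theorem 5.5]{minavc2024gcd}, so I would simply cite that result rather than reprove it.

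With homogeneity in hand, the main work is to identify the two factors of the wreath decomposition. The general principle from \cite[Section 3]{chudnovsky2024prime} is that a Cayley graph with a homogeneous set $X$ that is a subgroup decomposes as a wreath product of the quotient graph by the induced graph on $X$. Here the quotient $(\F_q[x]/P^k)/I \cong \F_q[x]/P^{k-1}$ is itself the vertex set of a gcd-graph. First I would show that the quotient graph is precisely $G_{P^{k-1}}(D_1)$, where $D_1 = \{P^{k_1}, \ldots, P^{k_{s-1}}\}$ is obtained from $D$ by deleting the top divisor $P^{k_{s}}$ if it equals $P^{k-1}$ and otherwise keeping all of $D$; the key computation is that for $a, b \notin$ the same $I$-coset, $\gcd(a - b, P^k) = P^j$ with $j \le k-1$, and the reduction modulo $P^{k-1}$ gives $\gcd(\bar a - \bar b, P^{k-1}) = P^j$, so the adjacency in the quotient matches $G_{P^{k-1}}(D_1)$ exactly.

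The second factor is the induced subgraph on $I$ itself. Two distinct elements $c P^{k-1}, c' P^{k-1}$ of $I$ satisfy $\gcd(cP^{k-1} - c'P^{k-1}, P^k) = P^{k-1}$ since their difference $(c-c')P^{k-1}$ is a nonzero multiple of $P^{k-1}$ but (as $\deg((c-c')P^{k-1}) < \deg P^k$) is not divisible by $P^k$. Hence any two distinct vertices of $I$ are adjacent precisely when $P^{k-1} \in D$, and non-adjacent otherwise. This yields the induced graph $K_{|P|}$ when $P^{k-1} \in D$ and the empty graph $E_{|P|}$ when $P^{k-1} \notin D$, matching the two cases in the statement. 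Assembling these two identifications via the wreath-product criterion from \cite[Theorem 5.5]{minavc2024gcd} completes the proof.

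I expect the main obstacle to be bookkeeping around the definition of $D_1$: one must check carefully that deleting $P^{k-1}$ from $D$ (when present) corresponds exactly to removing the intra-$I$ edges that are then restored by the choice of $K_{|P|}$ versus $E_{|P|}$, so that no edge is double-counted or lost. Concretely, the subtlety is that $D_1$ as defined in the statement is literally $\{P^{k_1}, \ldots, P^{k_{s-1}}\}$, which drops the \emph{largest} exponent $k_s$; this is consistent only because in the wreath decomposition the divisor $P^{k-1}$ governs exactly the adjacencies \emph{within} $I$-cosets, which are accounted for by the second factor, while all remaining divisors $P^{k_1}, \ldots, P^{k_{s-1}}$ govern adjacencies \emph{between} distinct cosets and hence survive into the quotient graph $G_{P^{k-1}}(D_1)$. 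Verifying this clean separation of roles is the crux; once it is in place, the rest is a routine application of the cited structural results.
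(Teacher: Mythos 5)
Your proposal is correct, but it takes a different route from the paper: the paper offers no argument at all for \cref{thm:wreath_product_prime_power}, deducing it in one line from \cite[Theorem 5.5]{minavc2024gcd} (the general criterion for homogeneous sets and wreath decompositions of gcd-graphs over $\F_q[x]$), whereas you give a self-contained verification specialized to the prime-power case. What your approach buys is transparency: the valuation computation $v_P(w-a)=v_P(w-b)$ whenever $a-b\in I$ and $w\notin v+I$ proves homogeneity of every $I$-coset directly; the observation that $S_D\setminus I$ is a union of $I$-cosets while $S_D\cap I$ is either $I\setminus\{0\}$ or $\emptyset$ gives the two cases $K_{|P|}$ versus $E_{|P|}$; and the identification of the quotient graph with $G_{P^{k-1}}(D\setminus\{P^{k-1}\})$ is exactly right. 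What the citation buys the authors is brevity and uniformity with the non-prime-power case treated elsewhere. Two small points to tighten. First, for $a,b$ in distinct $I$-cosets you have $\gcd(a-b,P^k)=P^j$ with $j\le k-2$, not merely $j\le k-1$; this is precisely why every such divisor lies in $\Div(P^{k-1})$ and survives into the quotient. Second, your reading of $D_1$ as ``delete $P^{k_s}$ only when $k_s=k-1$, otherwise keep all of $D$'' is the correct one --- the literal formula $D_1=\{P^{k_1},\dots,P^{k_{s-1}}\}$ in the statement is accurate only in the case $P^{k-1}\in D$, and the subsequent corollary on clique numbers (where the induction needs $|D_1|=s$ when $P^{k-1}\notin D$) confirms that $D_1=D\setminus\{P^{k-1}\}$ is the intended meaning. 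Your final paragraph slightly muddles this by suggesting the largest exponent is always dropped; it is not, and your own earlier formulation is the one to keep.
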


We discuss various consequences of \cref{thm:wreath_product_prime_power}. 
First, let us briefly recall the definitions of graph concepts. Let $G$ be a graph. The clique number $\omega(G)$ of $G$ is the maximum number of vertices in a complete subgraph of $G$. The chromatic number $\chi(G)$ is the minimum number of colors needed to color the vertices of $G$ so that adjacent vertices receive different colors. The independence number $\alpha(G)$ is the maximum number of vertices in an independent set of $G$; that is, a set of vertices no two of which are adjacent. We refer readers to \cite[Section 2.3]{hammack2011handbook} for some further discussions and results regarding these numbers. Graph $G$ is said to be perfect if for every induced subgraph of $G$, the clique number equals the chromatic number.

It is known that the wreath product $G_1 * G_2$ is perfect if and only if both $G_1, G_2$ are perfect (see \cite{lovasz1972normal}). Since complete and co-complete graphs are perfect, by mathematical induction, we conclude from \cref{thm:wreath_product_prime_power} that

\begin{cor}
    $G_{P^k}(D)$ is a perfect graph. 
\end{cor}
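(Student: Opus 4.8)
The plan is to deduce the corollary directly from the wreath-product decomposition in \cref{thm:wreath_product_prime_power} together with the classical characterization of perfection under the lexicographic (wreath) product. The key external fact I would invoke is the theorem of Lov\'asz (\cite{lovasz1972normal}) stating that a wreath product $G_1 * G_2$ is perfect if and only if both factors $G_1$ and $G_2$ are perfect. This reduces the problem to checking that the building blocks appearing in the decomposition are perfect and then propagating perfection through an inductive construction.

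First I would set up an induction on $k$, the exponent of the prime power $f = P^k$. For the base case $k = 0$ (or $k=1$, depending on the convention used for $\Div(P^k)$), the graph $G_{P^k}(D)$ is either a single vertex, a complete graph, or a co-complete graph on $|P|$ nodes; each of these is trivially perfect since complete and edgeless graphs contain no induced odd holes or antiholes of length $\geq 5$. For the inductive step, I would apply \cref{thm:wreath_product_prime_power}, which expresses $G_{P^k}(D)$ as either $G_{P^{k-1}}(D_1) * K_{|P|}$ or $G_{P^{k-1}}(D_1) * E_{|P|}$, where $D_1 = \{P^{k_1}, \ldots, P^{k_{s-1}}\}$ is a subset of $\Div(P^{k-1})$ of the same prescribed form. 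By the induction hypothesis, $G_{P^{k-1}}(D_1)$ is perfect, and the second factor ($K_{|P|}$ or $E_{|P|}$) is perfect as a complete or co-complete graph. By Lov\'asz's theorem the wreath product is then perfect, completing the induction.

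I do not anticipate a genuine obstacle here, as the statement is essentially a formal consequence of a structural theorem already proved in the excerpt; the work has effectively been done upstream. The only point requiring mild care is bookkeeping: one must verify that $D_1$ remains a legitimate subset of $\Div(P^{k-1})$ so that the induction hypothesis genuinely applies to $G_{P^{k-1}}(D_1)$, but this is guaranteed by the explicit description of $D_1$ in \cref{thm:wreath_product_prime_power}. One should also confirm that Lov\'asz's result is stated for the product in the same orientation (first factor versus second factor) as \cref{def:wreath_product}; since the characterization is symmetric in requiring \emph{both} factors to be perfect, the orientation is immaterial for our purposes.

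In summary, the proof is a short induction: base case handled by the triviality of complete and empty graphs, inductive step handled by combining \cref{thm:wreath_product_prime_power} with the Lov\'asz perfection criterion for wreath products. The brevity is expected, since the structural heavy lifting is supplied by the wreath-product decomposition theorem.
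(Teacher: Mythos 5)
Your argument is correct and is essentially the same as the paper's: the authors likewise deduce perfection by induction from \cref{thm:wreath_product_prime_power}, using the Lov\'asz criterion that $G_1 * G_2$ is perfect if and only if both factors are, together with the perfection of complete and co-complete graphs. No differences of substance.
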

Next, we discuss the clique, chromatic, and independent numbers of $G_{P^k}(D)$. 
By \cite{hammack2011handbook}, we know that the clique number $\omega(G)$ of a graph $G$ behaves well with respect to wreath products: $ \omega(G_1 * G_2) = \omega(G_1) \omega(G_2)$. Therefore, we have the following conclusion.

\begin{cor}
    The clique and chromatic numbers of $G_{P^k}(D)$ are $|P|^{|D|} = |P|^s.$
\end{cor}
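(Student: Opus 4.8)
The plan is to compute both the clique number and the chromatic number by induction on $k$, using the wreath-product decomposition from \cref{thm:wreath_product_prime_power} together with the multiplicativity of $\omega$ under the wreath product. First I would recall the two relevant facts: that $\omega(G_1 * G_2) = \omega(G_1)\,\omega(G_2)$ (stated in the excerpt via \cite{hammack2011handbook}), and that for a perfect graph the chromatic number equals the clique number. Since the preceding corollary establishes that $G_{P^k}(D)$ is perfect, it suffices to compute $\omega(G_{P^k}(D))$ and the chromatic number will follow automatically.

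The induction proceeds on $k$. For the base case $k=1$, the only possible $D$ is $D=\{1\}$ (or $D = \emptyset$), and $G_{P}(\{1\})$ is the complete graph $K_{|P|}$, whose clique number is $|P| = |P|^{|D|}$ since $s = |D| = 1$; if $D = \emptyset$ the graph is edgeless and $\omega = 1 = |P|^0$. For the inductive step, I would split according to whether $P^{k-1} \in D$. If $P^{k-1} \in D$, then \cref{thm:wreath_product_prime_power} gives $G_{P^k}(D) \cong G_{P^{k-1}}(D_1) * K_{|P|}$, where $D_1 = \{P^{k_1}, \ldots, P^{k_{s-1}}\}$ has $s-1$ elements. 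Applying multiplicativity and the induction hypothesis $\omega(G_{P^{k-1}}(D_1)) = |P|^{s-1}$ yields
\[
\omega(G_{P^k}(D)) = |P|^{s-1} \cdot \omega(K_{|P|}) = |P|^{s-1} \cdot |P| = |P|^{s},
\]
as desired, since here $|D| = s$. If instead $P^{k-1} \notin D$, then $G_{P^k}(D) \cong G_{P^{k-1}}(D_1) * E_{|P|}$ with $D_1 = D$ having the full $s$ elements; multiplicativity and $\omega(E_{|P|}) = 1$ give $\omega(G_{P^k}(D)) = |P|^{s} \cdot 1 = |P|^{s}$, again matching $|D| = s$.

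The chromatic number claim then follows immediately: because $G_{P^k}(D)$ is perfect, $\chi = \omega = |P|^{|D|}$. I would close by observing $|D| = s$ to reconcile the two notations used in the statement.

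The main subtlety — not really an obstacle, but the point requiring care — is bookkeeping the two cases for $D_1$: when $P^{k-1}$ is removed from $D$ the index count drops by one but it is compensated by the factor $|P|$ from $K_{|P|}$, whereas when $P^{k-1} \notin D$ the set $D_1 = D$ keeps all $s$ elements but contributes only a trivial factor from $E_{|P|}$. Both bookkeepings land on $|P|^s$, so I would make sure the induction hypothesis is stated as $\omega(G_{P^m}(D')) = |P|^{|D'|}$ uniformly over all valid $D'$ and all $m < k$, rather than fixing a single $D$, so that it applies cleanly to both $D_1$'s arising in the two branches.
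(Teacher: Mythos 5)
Your proof is correct and follows essentially the same route as the paper: induction on $k$ via the wreath-product decomposition of \cref{thm:wreath_product_prime_power}, multiplicativity of $\omega$ under $*$, and perfectness to transfer the result to the chromatic number. Your explicit bookkeeping of $D_1$ in the two branches (in particular noting that $D_1 = D$ when $P^{k-1}\notin D$) is in fact slightly more careful than the paper's write-up, which reuses the formula $D_1=\{P^{k_1},\ldots,P^{k_{s-1}}\}$ in both cases.
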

\begin{proof}
By \cref{thm:wreath_product_prime_power}, we know that 
\[
G_{P^k}(D) \cong 
\begin{cases}
    G_{P^{k-1}}(D_1) * K_{|P|} & \qquad \text{if } P^{k-1} \in D, \\
    G_{P^{k-1}}(D_1) * E_{|P|} & \qquad \text{if } P^{k-1} \not \in D. \\
\end{cases}\] 
Here $D_1 = D \setminus \{P^{k-1}\}$. Since the clique number behaves well with respect to the wreath product, we have 

\[
\omega(G_{P^k}(D)) =  
\begin{cases}
    |P| \omega(G_{P^{k-1}}(D_1)) & \qquad\text{if } P^{k-1} \in D, \\
    \omega(G_{P^{k-1}}(D_1)) & \qquad \text{if } P^{k-1} \not \in D. \\
\end{cases}\] 
By induction, we conclude that $\omega(G_{P^k}(D))=|P|^{s}.$ Since $G_{P^k}(D)$ is perfect, its chromatic number is also equal to $|P|^s.$
\end{proof}

Finally, the wreath product also behaves well with respect to taking complements: $(G_1 * G_2)^c = (G_1)^c * (G_2)^c$ (see \cite[Page 44]{hammack2011handbook}). Therefore, by an identical argument, we also have: 

\begin{cor}
    The independence number of $G_{P^k}(D)$ is $|P|^{k-|D|} = |P|^{k-s}.$
\end{cor}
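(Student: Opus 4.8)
The plan is to prove the statement about the independent number of $G_{P^k}(D)$ by adapting the exact same wreath-product induction used for the clique number, exploiting the behavior of the wreath product under complementation. The key observation, already recorded in the excerpt, is that $(G_1 * G_2)^c = (G_1)^c * (G_2)^c$. Since the independence number of a graph is the clique number of its complement, i.e. $\alpha(G) = \omega(G^c)$, this complementation identity is precisely what lets us transfer the multiplicative behavior of $\omega$ under wreath products over to $\alpha$.

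First I would apply \cref{thm:wreath_product_prime_power} and take complements of both sides. Using the distributivity of the complement over the wreath product, together with $(K_{|P|})^c = E_{|P|}$ and $(E_{|P|})^c = K_{|P|}$, this yields
\[
(G_{P^k}(D))^c \cong
\begin{cases}
    (G_{P^{k-1}}(D_1))^c * E_{|P|} & \text{if } P^{k-1} \in D, \\
    (G_{P^{k-1}}(D_1))^c * K_{|P|} & \text{if } P^{k-1} \not \in D. \\
\end{cases}
\]
Applying $\alpha(G) = \omega(G^c)$ and the multiplicativity $\omega(G_1 * G_2) = \omega(G_1)\omega(G_2)$, I would obtain a recursion in which $\alpha(G_{P^k}(D))$ picks up a factor of $|P|$ exactly when $P^{k-1} \notin D$ (since $E_{|P|}$ has $\omega = |P|$) and is unchanged when $P^{k-1} \in D$. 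This is the mirror image of the clique-number recursion, so by induction on $k$ the total accumulated factor is $|P|$ raised to the number of indices not in $D$, which among the $k$ levels $\{0,1,\ldots,k-1\}$ is exactly $k - |D| = k - s$.

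I expect no serious obstacle here: the entire argument is a formal dualization of the preceding corollary, and every ingredient (the wreath decomposition, the complementation identity, and the multiplicativity of $\omega$) is already available. The only point requiring mild care is bookkeeping the base case of the induction and confirming that the complement distributes cleanly, so that each of the $k$ recursive steps contributes a factor of $|P|^{0}$ or $|P|^{1}$ according to whether the corresponding level lies in $D$; summing these exponents gives $k - s$. Alternatively, one could bypass the induction entirely by noting that $\alpha(G_{P^k}(D)) = \omega((G_{P^k}(D))^c)$ and that $(G_{P^k}(D))^c$ is itself essentially a gcd-graph whose clique number the previous corollary already computes, but the inductive route is the most transparent and self-contained.
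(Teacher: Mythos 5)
Your proposal is correct and is essentially the paper's own argument: the paper likewise deduces the independence number from the clique-number computation by combining $(G_1 * G_2)^c = (G_1)^c * (G_2)^c$ with $\alpha(G)=\omega(G^c)$ and running the identical wreath-product induction. The only blemish is your parenthetical ``since $E_{|P|}$ has $\omega = |P|$'' --- in fact $\omega(E_{|P|})=1$, and the factor $|P|$ at a level with $P^{k-1}\notin D$ comes from $\omega(K_{|P|})=|P|$ in the complemented decomposition (equivalently, $\alpha(E_{|P|})=|P|$), which is exactly what your displayed recursion already shows, so the argument is unaffected.
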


\section{Gcd-graphs associated with prime powers II: \\ Spectral properties of $G_{P^k}(D)$} \label{sec:prime_powers-II}
We now focus on the spectral properties of $G_{P^k}(D)$. While several of the results in this section are similar to those in \cite{sander2018structural,sander2015so}, many are new, thanks to insights inspired by the experimental data that we generate for this project.

\subsection{Spectral descriptions for $G_{P^k}(D)$.}
We first have the following observation about Ramanujan sums. 

\begin{lem} \label{lem:two_powers}
Suppose that $P \in \F_q[x]$ is a monic irreducible polynomial. Let $m,k$ be two non-negative integers. Then 
\[
c(P^m, P^k) =
\begin{cases}
    \varphi(P^k) & \qquad \text{if } m \geq k, \\
    -|P|^m & \qquad \text{if } k-m=1, \\
    0 & \qquad\text{if } k- m \geq 2.
\end{cases}
\]
\end{lem}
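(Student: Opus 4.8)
The plan is to compute $c(P^m, P^k)$ directly from the closed-form expression for Ramanujan sums given in \eqref{eq:formula_for_c}, namely $c(g,f) = \mu(t)\varphi(f)/\varphi(t)$ where $t = f/\gcd(f,g)$. Here we take $g = P^m$ and $f = P^k$, so everything reduces to understanding the quantity $t = P^k/\gcd(P^k, P^m)$. Since $P$ is irreducible, $\gcd(P^k, P^m) = P^{\min(m,k)}$, and hence $t = P^{k - \min(m,k)} = P^{\max(k-m, 0)}$.

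First I would treat the case $m \geq k$. Then $\min(m,k) = k$, so $t = P^0 = 1$. Since $\mu(1) = 1$ and $\varphi(1) = 1$, formula \eqref{eq:formula_for_c} gives $c(P^m, P^k) = \varphi(P^k)$, matching the first branch. Next I would treat $m < k$, where $t = P^{k-m}$ with $k - m \geq 1$. Here I invoke the two basic facts about the function-field Möbius function $\mu$: namely $\mu(P) = -1$ for an irreducible $P$, and $\mu(P^j) = 0$ whenever $j \geq 2$ (since $P^j$ is then not squarefree). For the subcase $k - m = 1$, we have $t = P$, so $\mu(t) = -1$ and $c(P^m, P^k) = -\varphi(P^k)/\varphi(P)$. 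The final step is the short computation $\varphi(P^k)/\varphi(P) = |P|^{k-1}(|P|-1)/((|P|-1)) \cdot |P|^{1-k} \cdot |P|^{k-1}$; more cleanly, using $\varphi(P^j) = |P|^{j-1}(|P|-1) = |P|^j - |P|^{j-1}$, one gets $\varphi(P^k)/\varphi(P) = |P|^{k-1}$, and since $k - m = 1$ means $k - 1 = m$, this equals $|P|^m$, giving $c(P^m,P^k) = -|P|^m$ as claimed. For the subcase $k - m \geq 2$, we have $t = P^{k-m}$ with exponent at least $2$, so $\mu(t) = 0$ and $c(P^m, P^k) = 0$.

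This is essentially a routine specialization of \eqref{eq:formula_for_c}, so there is no genuine obstacle; the only points requiring care are the standard evaluations $\mu(P) = -1$, $\mu(P^j) = 0$ for $j \geq 2$, and the identity $\varphi(P^k)/\varphi(P) = |P|^{k-1}$. I would state these explicitly (citing the basic properties of the function-field arithmetic functions, e.g. from \cite{rosen2013number}) and then assemble the three branches into the displayed case distinction. If one wants to be fully self-contained, the identity $\varphi(P^j) = |P|^{j-1}(|P|-1)$ follows from the local structure of $\F_q[x]/P^j$, and the telescoping ratio is immediate.
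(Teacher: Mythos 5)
Your proof is correct and is essentially the paper's own argument: the paper simply says the lemma ``follows directly from \eqref{eq:formula_for_c} and the formula $\varphi(P^e)=|P|^{e-1}(|P|-1)$,'' and you have spelled out exactly that specialization (computing $t=P^{\max(k-m,0)}$ and evaluating $\mu$ and the ratio $\varphi(P^k)/\varphi(P)=|P|^{k-1}$). The only cosmetic issue is the garbled intermediate expression for $\varphi(P^k)/\varphi(P)$ before your ``more cleanly'' correction, which you should delete.
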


\begin{proof}
    This follows directly from \cref{eq:formula_for_c} and from the formula $\varphi(P^e)=|P|^{e-1}(|P|-1).$
\end{proof}

Let us introduce the following notation which is inspired by a counterpart over $\Z$ (see \cite[Theorem 1.1]{sander2015so}).
\begin{definition}
We define the following function 
    \[ \chi(P^k, D, t) = \begin{cases}
    1 &  \qquad\text{if } k_i = k -t -1 \text{ for some } 1 \leq i \leq s, \\
    0 &  \qquad\text{else.}
    \end{cases}\]
\end{definition}

With this notation, we can now calculate the spectrum of $G_{P^k}(D)$ explicitly. The following statement is a direct analog of \cite[Theorem 1.1]{sander2015so} in the function fields setting. 
\begin{thm} \label{prop:spectra_prime_power}

Let $g \in \F_q[x]/P^k$ and $P^t = \gcd(P^k, g).$  Let $\lambda_{g}(P^k,D)$ be the eigenvalue of $G_{P^k}(D)$ as described in \cref{eq:definition_of_lambda}. Then we have the following. 
\begin{align*} \lambda_{g}(P^k, D) &= \lambda_{P^t}(P^k, D) = -\chi(P^k, D, t)|P|^t + \sum_{k_i \geq k-t} \varphi(P^{k-k_i}) \\
&= -\chi(P^k, D, t)|P|^t + (|P|-1) \sum_{k_i \geq k-t} |P|^{k-k_i-1}.
\end{align*}
\end{thm}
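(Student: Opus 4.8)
The plan is to compute $\lambda_g(P^k,D)$ directly from its definition in \cref{eq:definition_of_lambda} by substituting the explicit values of the relevant Ramanujan sums supplied by \cref{lem:two_powers}. The first observation I would record is that $\lambda_g(P^k,D)$ depends on $g$ only through $\gcd(P^k,g)=P^t$, which justifies the equality $\lambda_g(P^k,D)=\lambda_{P^t}(P^k,D)$. This follows because the formula \eqref{eq:formula_for_c} for $c\left(g,\frac{P^k}{d}\right)$ depends on $g$ only via $\gcd\!\left(\frac{P^k}{d},g\right)$, and since every divisor $\frac{P^k}{d}$ appearing here is a power of $P$, the relevant gcd is entirely determined by $t=\mathrm{ord}_P(g)$ (capped appropriately). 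So without loss of generality I may take $g=P^t$.

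Next I would unwind the sum. Writing $D=\{P^{k_1},\dots,P^{k_s}\}$ with $0\le k_1<\cdots<k_s<k$, each term is
\[
\lambda_{P^t}(P^k,D)=\sum_{i=1}^{s} c\!\left(P^t,\frac{P^k}{P^{k_i}}\right)=\sum_{i=1}^{s} c\!\left(P^t,P^{k-k_i}\right).
\]
Now I apply \cref{lem:two_powers} with $m=t$ and exponent $k-k_i$. The three cases of that lemma partition the index set according to the comparison of $t$ with $k-k_i$: the term equals $\varphi(P^{k-k_i})$ when $t\ge k-k_i$ (equivalently $k_i\ge k-t$), it equals $-|P|^t$ when $(k-k_i)-t=1$ (equivalently $k_i=k-t-1$), and it vanishes when $(k-k_i)-t\ge 2$ (equivalently $k_i\le k-t-2$). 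Summing these contributions gives exactly
\[
\lambda_{P^t}(P^k,D)=\sum_{k_i\ge k-t}\varphi(P^{k-k_i})\;+\;(-|P|^t)\cdot\#\{i : k_i=k-t-1\}.
\]
Since the $k_i$ are distinct, the cardinality $\#\{i:k_i=k-t-1\}$ is either $0$ or $1$, and this is precisely the indicator $\chi(P^k,D,t)$ by its definition. This yields the first displayed equality in the theorem.

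Finally I would pass to the second displayed line by substituting the prime-power Euler formula $\varphi(P^{e})=|P|^{e-1}(|P|-1)$ recorded inside the proof of \cref{lem:two_powers}, applied with $e=k-k_i$, so that $\varphi(P^{k-k_i})=(|P|-1)|P|^{k-k_i-1}$, and then factoring $(|P|-1)$ out of the sum. I expect no serious obstacle here; the only point demanding a little care is the bookkeeping of the inequalities in the case analysis of \cref{lem:two_powers}, namely checking that the three regimes $t\ge k-k_i$, $k_i=k-t-1$, and $k_i\le k-t-2$ correctly and exhaustively translate the lemma's conditions $m\ge k$, $k-m=1$, $k-m\ge 2$ after the index shift. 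Verifying that the boundary cases are assigned to exactly one regime, and that the single possible value $k_i=k-t-1$ is what produces the $-\chi\,|P|^t$ term rather than being absorbed into the $\varphi$-sum, is the main thing to get right.
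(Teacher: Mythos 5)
Your proposal is correct and follows essentially the same route as the paper: reduce to $g=P^t$ via the fact that $c(g,f)$ depends only on $\gcd(g,f)$, expand $\lambda_{P^t}(P^k,D)=\sum_i c(P^t,P^{k-k_i})$, and apply \cref{lem:two_powers} case by case, with the $k_i=k-t-1$ case producing the $-\chi(P^k,D,t)|P|^t$ term. Your case bookkeeping ($k_i\ge k-t$, $k_i=k-t-1$, $k_i\le k-t-2$) is accurate, so there is nothing to correct.
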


\begin{proof}
By \cref{eq:definition_of_lambda}, we know that  $c(g,f) = c(\gcd(f,g),f)$ for all $f$ and $g$. In particular,  we have $\lambda_{g}(P^k, D) = \lambda_{P^t}(P^k, D)$. We then have 
\[  \lambda_{P^t}(P^k, D) = \sum_{i=1}^s c(P^t, P^{k-k_i}).\]
By \cref{lem:two_powers} and the definition of $\chi(P^k,D,t)$ we conclude that 
\[ \lambda_{P^t}(P^k, D) = -\chi(P^k, D, t)|P|^t + \sum_{k_i \geq k-t} \varphi(P^{k-k_i}) = -\chi(P^k, D, t)|P|^t + (|P|-1) \sum_{k_i \geq k-t} |P|^{k-k_i-1}, \] 
and we are done.
    \end{proof}

We now use \cref{prop:spectra_prime_power} to describe various arithmetic and algebraic properties of the spectrum of $G_{P^k}(D).$

\subsection{Congruence of eigenvalues of $G_{P^k}(D)$}
In this subsection, by utilizing \cref{prop:spectra_prime_power}, we investigate some congruence properties of eigenvalues of $G_{P^k}(D)$. We then give an explicit criterion for a number $m$ to be an eigenvalue of $G_{P^k}(D)$ for some choice of $D$. In particular, we show that every integer $m$ can be realized as an eigenvalue of $G_{P^k}(D)$ for appropriate choices of $P$ and $D.$ We start this section with an observation which is suggested by our numerical data. 

\begin{prop} \label{prop:congruence}
Let $\lambda \in \Z$ be an eigenvalue of $G_{P^k}(D).$ Then 
\[
\lambda \equiv 
\begin{cases}
    0 \pmod{|P|-1} & \qquad \text{if } \lambda \geq 0, \\
    -1 \pmod{|P|-1} & \qquad \text{if } \lambda <0. 
\end{cases} \]     
\end{prop}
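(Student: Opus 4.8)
The plan is to read the congruence directly off the explicit eigenvalue formula in \cref{prop:spectra_prime_power} and analyze the two summands modulo $|P|-1$. Recall that for $g$ with $\gcd(P^k,g)=P^t$ we have
\[
\lambda = \lambda_{P^t}(P^k,D) = -\chi(P^k,D,t)\,|P|^t + (|P|-1)\sum_{k_i \geq k-t} |P|^{k-k_i-1}.
\]
First I would observe that the entire second summand is an integer multiple of $|P|-1$, hence $\equiv 0 \pmod{|P|-1}$, so it contributes nothing to the congruence class; everything is governed by the first term $-\chi(P^k,D,t)\,|P|^t$. The key elementary fact I would invoke is that $|P| \equiv 1 \pmod{|P|-1}$, so $|P|^t \equiv 1 \pmod{|P|-1}$ for every $t \geq 0$. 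Thus the first term is congruent to $-\chi(P^k,D,t) \pmod{|P|-1}$, which is $0$ when $\chi=0$ and $-1$ when $\chi=1$.

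This already splits $\lambda$ into two residue classes mod $|P|-1$: either $\lambda \equiv 0$ (when $\chi(P^k,D,t)=0$) or $\lambda \equiv -1$ (when $\chi(P^k,D,t)=1$). The remaining task is to match this dichotomy to the sign condition stated in the proposition, i.e.\ to show that $\chi = 0$ forces $\lambda \geq 0$ and $\chi = 1$ forces $\lambda < 0$. When $\chi(P^k,D,t)=0$ the formula gives $\lambda = (|P|-1)\sum_{k_i \geq k-t}|P|^{k-k_i-1} \geq 0$, which settles one direction immediately. The more delicate direction is the case $\chi(P^k,D,t)=1$: here I must show that the negative contribution $-|P|^t$ strictly dominates, so that $\lambda < 0$.

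The main obstacle is therefore the inequality in the case $\chi=1$. By definition $\chi=1$ means $k_i = k-t-1$ for some index $i$; I would use this to control the nonnegative sum. The indices appearing in $\sum_{k_i \geq k-t}|P|^{k-k_i-1}$ satisfy $k_i \geq k-t$, so each exponent $k-k_i-1 \leq t-1$, and since the $k_i$ are strictly increasing and bounded by $k-1$, there are at most finitely many such terms with distinct exponents all $\leq t-1$. Bounding the sum by a geometric series $\sum_{j=0}^{t-1}|P|^{j} = \frac{|P|^t-1}{|P|-1}$ gives $(|P|-1)\sum_{k_i\geq k-t}|P|^{k-k_i-1} \leq |P|^t - 1 < |P|^t$, so $\lambda = -|P|^t + (\text{that sum}) < 0$, as required. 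I expect verifying this geometric bound carefully — in particular that the exponents $k-k_i-1$ are \emph{distinct} and all lie in $\{0,1,\dots,t-1\}$ because the $k_i$ are strictly increasing and the term $k_i=k-t-1$ is excluded from the range $k_i \geq k-t$ — to be the one genuinely substantive step, after which the congruence follows cleanly.
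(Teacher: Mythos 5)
Your proposal is correct and follows essentially the same route as the paper: both use the explicit formula from \cref{prop:spectra_prime_power}, bound the nonnegative sum by the geometric series $(|P|-1)\sum_{j=0}^{t-1}|P|^j = |P|^t-1 < |P|^t$ to show that the sign of $\lambda$ is determined by whether $\chi(P^k,D,t)$ is $0$ or $1$, and then read off the congruence from $|P|^t \equiv 1 \pmod{|P|-1}$. Your explicit remark that the exponents $k-k_i-1$ are distinct and lie in $\{0,\dots,t-1\}$ is a slightly more careful justification of the same bound the paper states directly.
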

\begin{proof}
    
Suppose that $\lambda = \lambda_g(P^k, D)$ for some $g$, where we keep the same notation as in \cref{prop:spectra_prime_power}. We have \[ \lambda_{g}(P^k, D) = \lambda_{P^t}(P^k, D) = -\chi(P^k, D, t)|P|^t + \sum_{k_i \geq k-t} \varphi(P^{k-k_i}). \] 
    We have 
    \[ 0 \leq  \sum_{k>k_i \geq k-t} \varphi(P^{k-k_i}) \leq \sum_{i=1}^t \varphi(P^i) = (|P|-1) \frac{|P|^{t}-1}{|P|-1} = |P|^{t}-1.\]
Additionally, since $\chi(P^k, D, t) \in \{0, 1 \}$, we conclude that $\lambda \geq 0$ if and only if $\chi(P^k, D, t) = 0$. Therefore 
\begin{equation} \label{eq:formula_for_lambda}
\lambda = 
\begin{cases}
    \sum_{k_i \geq k-t} \varphi(P^{k-k_i}) = (|P|-1) \sum_{k_i \geq k-t} |P|^{k-k_i-1} & \text{if } \lambda \geq 0, \\
    -|P|^t + \sum_{k_i \geq k-t} \varphi(P^{k-k_i}) =-|P|^t + (|P|-1) \sum_{k_i \geq k-t} |P|^{k-k_i-1} & \text{if } \lambda <0. \\
\end{cases}
\end{equation}

We then conclude that 
\begin{empheq}[left={\lambda \equiv \empheqlbrace}]{align*}
  0 \quad & \pmod{|P|-1} && \text{if } \lambda \geq 0, \\
  -1 \quad & \pmod{|P|-1} && \text{if } \lambda < 0. 
\end{empheq}
This completes the proof.
\end{proof}
We have a direct corollary of \cref{prop:congruence}. 
\begin{cor} \label{cor:possible_eigen}
    Let $\lambda$ be an integer satisfying the congruence condition 
    \[
\lambda \equiv 
\begin{cases}
    0 \pmod{|P|-1} & \qquad\text{if } \lambda \geq 0, \\
    -1 \pmod{|P|-1} & \qquad\text{if } \lambda <0. \\
\end{cases} \]     
    
    Let $\lambda_0$ be a non-negative integer defined by the following rule 
    \[ \lambda_0 = \begin{cases}
    \dfrac{\lambda}{|P|-1} & \qquad \text{if } \lambda \geq 0, \\
    \dfrac{|\lambda+1|}{|P|-1} & \qquad\text{if } \lambda <0. \\
\end{cases} \]     
Then $\lambda$ is an eigenvalue of $G_{P^k}(D)$ for some $k$ and $D$ if and only if each digit in the base-$|P|$ representation of $\lambda_0$ is either $0$ or $1.$
\end{cor}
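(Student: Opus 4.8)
The plan is to read both implications off the explicit eigenvalue formula \eqref{eq:formula_for_lambda} proved inside \cref{prop:congruence}, interpreting it as a statement about base-$|P|$ expansions. Write $Q=|P|$ for brevity. Recall that if $\lambda=\lambda_g(P^k,D)$ with $P^t=\gcd(P^k,g)$, then \eqref{eq:formula_for_lambda} expresses $\lambda$ through the sum $\sum_{k_i\ge k-t}Q^{k-k_i-1}$ together with the indicator $\chi(P^k,D,t)$. The key elementary fact is that the exponents $k-k_i-1$ occurring in this sum are pairwise distinct (the $k_i$ are distinct) and satisfy $0\le k-k_i-1\le t-1$ (since $k-t\le k_i<k$); hence the sum is always a sum of distinct powers of $Q$ with exponents in $\{0,\dots,t-1\}$.

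For the forward (``only if'') direction, suppose $\lambda$ is realized as above. If $\lambda\ge 0$ then $\chi=0$, so $\lambda=(Q-1)\sum_{k_i\ge k-t}Q^{k-k_i-1}$ and $\lambda_0=\lambda/(Q-1)$ is exactly that sum of distinct powers of $Q$; thus every base-$Q$ digit of $\lambda_0$ is $0$ or $1$. If $\lambda<0$ then $\chi=1$, and rewriting \eqref{eq:formula_for_lambda} as $\lambda=-1-(Q-1)\bigl(\tfrac{Q^t-1}{Q-1}-\sum_{k_i\ge k-t}Q^{k-k_i-1}\bigr)$ gives $\lambda_0=|\lambda+1|/(Q-1)=\tfrac{Q^t-1}{Q-1}-\sum_{k_i\ge k-t}Q^{k-k_i-1}$. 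Since $\tfrac{Q^t-1}{Q-1}=1+Q+\dots+Q^{t-1}$ is the length-$t$ repunit and the subtracted term is a sub-sum of it (its exponents lie in $\{0,\dots,t-1\}$), the difference is again a sum of distinct powers of $Q$, so its base-$Q$ digits are $0$ or $1$.

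For the reverse (``if'') direction I invert this construction. Suppose $\lambda_0=\sum_{j\in J}Q^{j}$ for a finite set $J$ of distinct non-negative integers. When $\lambda\ge 0$, put $t=1+\max J$, $k=t+1$ and $D=\{P^{\,t-j}:j\in J\}$; the exponents $t-j$ are distinct and lie in $[1,t]\subset[0,k)$, none equals $k-t-1=0$ (so $\chi=0$), and evaluating at $g=P^{t}$ returns $(Q-1)\lambda_0=\lambda$. When $\lambda<0$, write $\lambda_0=\sum_{j\in J}Q^{j}$ again, put $t=1+\max J$, $E=\{0,\dots,t-1\}\setminus J$, and take $k=t+2$, $D=\{P\}\cup\{P^{\,t+1-e}:e\in E\}$; here $P=P^{\,k-t-1}$ turns on $\chi(P^k,D,t)=1$ while lying strictly below the threshold $k-t$, and the remaining divisors reproduce $\sum_{e\in E}Q^{e}=\tfrac{Q^t-1}{Q-1}-\lambda_0$, so $g=P^{t}$ yields $\lambda$. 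The degenerate cases $\lambda=0$ (take $k=2$, $D=\{1\}$, $g=1$) and $\lambda=-1$ (take $k=3$, $D=\{P,P^2\}$, $g=P$) are checked by hand.

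I expect the only genuine friction to be the index bookkeeping in the negative case: one must confirm that the extra divisor $P^{\,k-t-1}$ inserted to force $\chi=1$ neither coincides with any $P^{\,t+1-e}$ nor satisfies $k_i\ge k-t$, so that it contributes exactly the term $-Q^{t}$ and not a term of the sum, and that $k$ is chosen large enough for every selected exponent to be an admissible proper divisor exponent $0\le k_i<k$. Once these inclusions are pinned down, both directions reduce to direct substitution into \eqref{eq:formula_for_lambda}.
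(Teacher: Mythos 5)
Your proof is correct and takes essentially the same route as the paper's: both read the claim off the explicit formula \eqref{eq:formula_for_lambda}, rewriting $\lambda_0$ as a sub-sum (respectively, the complementary sub-sum) of the repunit $1+|P|+\dots+|P|^{t-1}$, so that its base-$|P|$ digits are $0$ or $1$. The only difference is that you supply the explicit witnesses $(k,D,t)$ for the converse direction (and check the degenerate cases $\lambda=0,-1$), which the paper compresses into ``the statement follows directly from this formula.''
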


\begin{proof}
Let us keep the same notation as in the proof of \cref{prop:spectra_prime_power}. Then, we have 
\[
\lambda = 
\begin{cases}
    (|P|-1) \sum_{k_i \geq k-t} |P|^{k-k_i-1} & \qquad\text{if } \lambda \geq 0, \\
    -|P|^t + (|P|-1) \sum_{k_i \geq k-t} |P|^{k-k_i-1} & \qquad \text{if } \lambda <0. \\
\end{cases}\]       
Consequently, we have 
\[
\lambda_0 = 
\begin{cases}
    \sum_{k_i \geq k-t} |P|^{k-k_i-1} & \qquad\text{if } \lambda \geq 0,\\
    \sum_{i=0}^{t-1} |P|^i - \sum_{k_i \geq k-t} |P|^{k-k_i-1} & \qquad\text{if } \lambda <0. \\
\end{cases} \]       
We can see that the statement follows directly from this formula.
\end{proof}

If $|P|=2$ then we can see that the congruence condition mentioned in \cref{prop:congruence} is trivial. As a result, we have the following. 
\begin{prop}
    For each integer $m$, there exists an integer $k$ and a subset $D \subset \Div(P^k)$ with $P=x \in \F_2[x]$ such that $m$ is an eigenvalue of $G_{P^k}(D).$
\end{prop}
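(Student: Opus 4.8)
The plan is to specialize \cref{cor:possible_eigen} to the degenerate modulus $|P|-1=1$. Since $P=x\in\F_2[x]$ has degree one over $q=2$, we have $|P|=q^{\deg P}=2$ and hence $|P|-1=1$. The first thing I would note is that the congruence condition appearing in \cref{prop:congruence} and \cref{cor:possible_eigen} is then vacuous: every integer is $\equiv 0 \pmod 1$, so no target value $m$ is excluded on sign grounds, whether $m\geq 0$ or $m<0$.

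Next I would read off the associated non-negative integer $\lambda_0$. With $|P|-1=1$ the defining rule collapses to $\lambda_0=m$ for $m\geq 0$ and $\lambda_0=|m+1|$ for $m<0$, which is a well-defined non-negative integer in both cases. The only remaining hypothesis of \cref{cor:possible_eigen} is that every digit of $\lambda_0$ in base $|P|$ lie in $\{0,1\}$; but base $|P|=2$ is exactly the binary expansion, whose digits are always $0$ or $1$. Thus the digit condition holds automatically for every $m$, and \cref{cor:possible_eigen} produces $k$ and $D$ realizing $m$ as an eigenvalue of $G_{P^k}(D)$. In this sense the statement is an immediate specialization of the corollary, and there is no genuine obstacle beyond recognizing that the binary-digit constraint is empty.

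For concreteness I would also record an explicit witness, which is the one place demanding mild care. For $m\geq 0$, write $m=\sum_{j\in J}2^j$ in binary, choose any $k>\max J$, set $D=\{P^{\,k-1-j}:j\in J\}$, and evaluate at $g=0$; then $t=k$, $\chi(P^k,D,k)=0$, and \cref{prop:spectra_prime_power} gives the eigenvalue $\sum_{j\in J}\varphi(P^{\,k-(k-1-j)})=\sum_{j\in J}\varphi(P^{\,j+1})=\sum_{j\in J}2^j=m$. The exponents $k-1-j$ are distinct and lie in $[0,k-1]$ precisely because $k>\max J$, so $D$ is an admissible subset of $\Div(P^k)$; the trivial case $m=0$ is handled by $D=\emptyset$. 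For $m<0$ the construction is symmetric, now choosing $D$ so that $\chi(P^k,D,t)=1$, so that the term $-|P|^t$ in \cref{prop:spectra_prime_power} supplies the negative sign. The main (minor) obstacle is therefore only the bookkeeping of these indices, all of which is forced by the binary expansion of $\lambda_0$.
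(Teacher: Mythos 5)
Your argument is correct and matches the paper's (implicit) proof exactly: the paper derives this proposition directly from the remark that for $|P|=2$ the congruence condition of \cref{prop:congruence} is vacuous, so \cref{cor:possible_eigen} applies with the base-$2$ digit condition automatically satisfied. Your added explicit witness (reading $D$ off the binary expansion of $m$ and evaluating at $g=0$, so $t=k$) is a correct and welcome concretization, but not a different method.
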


\subsubsection{\textbf{Some special eigenvalues of $G_{P^k}(D)$}}
We now discuss the existence of small eigenvalues. Specifically, for each $\lambda \in \{-1, 0, 1\}$, we find the necessary and sufficient conditions for $\lambda$ to be an eigenvalue of $G_{P^k}(D).$ Quite surprisingly, we will see that $0$ and $-1$ cannot simultaneously be eigenvalues of $G_{P^k}(D)$ (see \cref{cor:exactly_one}). We start our discussion with the eigenvalue $0.$
\begin{prop} \label{prop:eigen_zero}
The following statements are equivalent.    
\begin{enumerate}
        \item $P^{k-1} \not \in D.$

        \item $0$ is an eigenvalue of $G_{P^k}(D).$
    \end{enumerate}
Consequently, there are exactly $2^{k-1}$ graphs in the family $G_{P^k}(D)$ that have $0$ as an eigenvalue. 
\end{prop}

\begin{proof}
    Let us first show that $(1)$ implies $(2).$ Let $\lambda = \lambda_{P^0}(P^k, D).$ Since $P^{k-1} \not \in D$ we know that $\chi(P^k, D, 0)=0$. Consequently, by \cref{prop:spectra_prime_power}, we conclude that $\lambda =0.$

    Conversely, let us assume that $\lambda = \lambda_{P^t}(P^k, D)= 0$ is an eigenvalue of $G_{P^k}(D).$ Suppose to the contrary that $P^{k-1} \in D$. By the proof of \cref{prop:congruence} we know that $\chi(P^k, D, t)=0$ and 
    \[ \sum_{k_i \geq k-t} |P|^{k-k_i-1}=0.\]
    Since $P^{k-1} \in D$, this implies that $t=0.$ However, this would imply that $\chi(P^k, D, 0)=1$ which is a contradiction. We conclude that $P^{k-1} \not \in D.$
\end{proof}
For the eigenvalue $-1$, we have the following proposition. 
\begin{prop} \label{prop:eigen_negative_one}
The following statements are equivalent. 
\begin{enumerate}
    \item $P^{k-1} \in D.$

    \item $-1$ is an eigenvalue of $G_{P^k}(D).$
\end{enumerate}
Consequently, there are exactly $2^{k-1}$ graphs in the family $G_{P^k}(D)$ that have $-1$ as an eigenvalue. 
\end{prop}
\begin{proof}
    Let us first show that $(1)$ implies $(2).$ Since $P^{k-1} \in D$, we conclude that $\chi(P^k, D, 0) =1.$ By \cref{prop:spectra_prime_power}, we conclude that $\lambda_{P^0}(P^k, D)=-1.$

    Conversely, suppose that $\lambda = \lambda_{P^t}(P^k, D)=-1$ is an eigenvalue of $G_{P^k}(D).$ Then $\chi(P^k, D, t)=1$ and 
     $-1= \lambda = -|P|^t + (|P|-1) \sum_{k_i \geq k-t} |P|^{k-k_i-1}.$ This implies that 
    \[ \sum_{i=0}^{t-1} |P|^{i}= \sum_{k_i \geq k-t} |P|^{k-k_i-1}. \]
    We conclude that $P^{k-1} \in D.$
\end{proof}

By combining \cref{prop:eigen_negative_one} and \cref{prop:eigen_zero} we have the following rather surprising corollary. 

\begin{cor} \label{cor:exactly_one}
    For each $D$, exactly one of the values $\lambda =0$ or $\lambda =-1$ is an eigenvalue of $G_{P^k}(D).$
\end{cor}

Our numerical data suggest that $1$ is an eigenvalue of $G_{P^k}(D)$ under some very special conditions. We remark that by \cref{prop:connected_prime_power}, we only need to consider the case  where $G_{P^k}(D)$ is connected; i.e., $1 \in D.$

\begin{prop}
Suppose that $1 \in D$. Then $1$ is an eigenvalue of $G_{P^k}(D)$ if and only if the following conditions hold. 
\begin{enumerate}
    \item $\F_q= \F_2.$
    \item $\deg(P)=1.$
    \item $P^{k-1} \in D.$
    \item $P^{k-2} \not \in D.$
\end{enumerate}
We remark that when $k=1$, the  condition (4) is understood as vacuous. 

\end{prop}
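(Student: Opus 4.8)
The plan is to leverage the explicit spectral formula of \cref{prop:spectra_prime_power} together with the congruence constraint of \cref{prop:congruence}. First I would dispatch conditions (1) and (2) using \cref{prop:congruence}: if $\lambda = 1$ is an eigenvalue, then since $1 \geq 0$ we must have $1 \equiv 0 \pmod{|P|-1}$, which forces $|P|-1 = 1$, i.e.\ $|P| = 2$. As $|P| = q^{\deg(P)}$, this is equivalent to $\F_q = \F_2$ and $\deg(P)=1$, which are exactly conditions (1) and (2). This step also shows that once (1) and (2) hold the congruence obstruction disappears, so all remaining work is purely combinatorial.

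Next, assuming $|P| = 2$, I would specialize the formula in \cref{prop:spectra_prime_power}. Writing $\lambda = \lambda_{P^t}(P^k, D)$ where $P^t = \gcd(P^k, g)$, and recalling from the proof of \cref{prop:congruence} that a nonnegative eigenvalue forces $\chi(P^k, D, t)=0$, the condition $\lambda = 1$ reduces to $\sum_{k_i \geq k-t} 2^{\,k-k_i-1} = 1$. The exponents $k-k_i-1$ are distinct nonnegative integers, since the $k_i$ are distinct and $k_i < k$; hence by uniqueness of the binary expansion the sum equals $1 = 2^0$ if and only if there is exactly one index $i$ with $k_i \geq k-t$, and that index satisfies $k_i = k-1$. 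In particular $P^{k-1} \in D$, which is condition (3).

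Finally I would pin down condition (4) and establish sufficiency. For necessity: if moreover $P^{k-2} \in D$, then $k-1$ being the unique $k_i$ with $k_i \geq k-t$ forces $t = 1$; but then $\chi(P^k, D, 1) = 1$ because $k-2 = k-1-1$ occurs as some $k_i$, contradicting $\chi(P^k, D, t)=0$. Hence $P^{k-2} \notin D$. For sufficiency, assuming (1)--(4) I would simply evaluate at $t = 1$: condition (4) gives $\chi(P^k, D, 1) = 0$, while condition (3) makes $k-1$ the only index with $k_i \geq k-1$, so the sum equals $2^0 = 1$ and thus $\lambda_{P}(P^k, D) = 1$. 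The only delicate point, and the main thing to get right, is the interaction between the $\chi$-term and the summation at the single admissible value $t=1$, together with the degenerate small cases (e.g.\ $k=1$, where $P^{k-2}$ is not a divisor so condition (4) holds vacuously), which I would verify directly against the formula.
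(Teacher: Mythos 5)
Your proposal is correct and follows essentially the same route as the paper's proof: congruence modulo $|P|-1$ forces $|P|=2$, and the explicit formula from \cref{prop:spectra_prime_power} with $\chi(P^k,D,t)=0$ reduces everything to the binary expansion of $1$. If anything, your treatment of condition (4) is slightly more careful than the paper's, since you correctly note that the eigenvalue $1$ need not be attained only at $t=1$ and instead derive $t=1$ as a consequence of the assumption $P^{k-2}\in D$ before reaching the contradiction.
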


\begin{proof}
First, let us assume that $\lambda =1$ is an eigenvalue of $G_{P^k}(D)$. Then the congruence relation described in \cref{prop:congruence} implies that $|P|-1$ is a divisor of $\lambda.$ This happens only if $2=|P|=q^{\deg(P)}.$ We conclude that $q=2$ and $\deg(P)=1.$ Furthermore, by \cref{eq:formula_for_lambda}, we can find $t$ such that $\chi(P^k, D, t)=0$ and 
\[ 1= \sum_{k_i \geq k-t} |P|^{k-k_i-1}.\]
The summands are distinct powers of $2$, and therefore there is only one summand equal to $2^0=1$. Hence, $P^{k-1}\in D$.  If $t=1$, then $\chi(P^k,D,1)$ gives $ P^{k-2}\notin D.$
If $t\geq 2$ and $P^{k-2}\in D$, then $ k-2\geq k-t$, so the term corresponding to $P^{k-2}$ occurs in the displayed sum. and it contributes
$2^{k-(k-2)-1}=2$,
which is a contradiction to the value of the sum being $1$. Therefore, $P^{k-2}\notin D$ in all cases. For the other direction, we can see that if all of the above conditions are satisfied, then $\lambda_{P^1}(P^k,D)=1.$
\end{proof}

\subsubsection{\textbf{Some estimates for the eigenvalues of $G_{P^k}(D)$}}

In this subsubsection, we discuss some estimates on the eigenvalues of $G_{P^k}(D).$ Here, we provide a direct analog of \cite[Section 3]{sander2015so}. Keeping the notation as in \cref{prop:spectra_prime_power}, we have the following (compare with \cite[Corollary 2.1]{sander2015so}).

\begin{prop} \label{prop:estimate}
Let $0 \leq u \leq v <k$. Then 
\[ |\lambda_{P^u}(P^k, D)| \leq |\lambda_{P^v}(P^k, D)|. \]

Furthermore, $\lambda_{P^k}(P^k, D)$ is the degree of $G_{P^{k}}(D)$, which is also its largest eigenvalue. 
\end{prop}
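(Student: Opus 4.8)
The proposition has two parts: a monotonicity statement for $|\lambda_{P^u}|$ along the chain of divisors, and the identification of $\lambda_{P^k}(P^k,D)$ as the degree and largest eigenvalue. I would prove both by working directly from the explicit formula in \cref{prop:spectra_prime_power}, which expresses $\lambda_{P^t}(P^k,D)$ as $-\chi(P^k,D,t)|P|^t+(|P|-1)\sum_{k_i\geq k-t}|P|^{k-k_i-1}$.

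For the monotonicity $|\lambda_{P^u}|\leq|\lambda_{P^v}|$ when $u\leq v$, the plan is to compare the two quantities using the sign dichotomy established in the proof of \cref{prop:congruence}: the eigenvalue is nonnegative exactly when $\chi=0$, in which case $\lambda_{P^t}=(|P|-1)\sum_{k_i\geq k-t}|P|^{k-k_i-1}$, and it is negative exactly when $\chi=1$, in which case $\lambda_{P^t}=-|P|^t+(|P|-1)\sum_{k_i\geq k-t}|P|^{k-k_i-1}$. Observe first that as $t$ increases from $u$ to $v$, the index set $\{k_i\geq k-t\}$ only grows, so the nonnegative sum $\sum_{k_i\geq k-t}|P|^{k-k_i-1}$ is monotonically nondecreasing in $t$. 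In the nonnegative case this immediately gives $\lambda_{P^u}\leq\lambda_{P^v}$, hence the inequality on absolute values. The negative case requires combining the growth of the sum with the bound $0\leq\sum_{k>k_i\geq k-t}\varphi(P^{k-k_i})\leq|P|^t-1$ already recorded in the proof of \cref{prop:congruence}, which controls how negative $\lambda_{P^t}$ can be relative to $|P|^t$.

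I expect the main obstacle to be the bookkeeping when $u$ and $v$ fall on opposite sides of the sign change, i.e. when $\lambda_{P^u}<0$ but $\lambda_{P^v}\geq 0$ (or vice versa), since then one must compare $|{-}|P|^u+(\text{sum})|$ against a pure nonnegative sum rather than matching like with like. The cleanest route is likely to bound $|\lambda_{P^t}|$ from above and below by expressions that are manifestly monotone in $t$: using \eqref{eq:formula_for_lambda}, in the negative case $|\lambda_{P^t}|=|P|^t-\sum_{k_i\geq k-t}\varphi(P^{k-k_i})$, which together with the displayed bound shows $|\lambda_{P^t}|$ is squeezed between $1$ and $|P|^t-1$, while in the nonnegative case $|\lambda_{P^t}|$ is a partial geometric sum bounded above by $|P|^t-1$ as well; a careful case analysis then yields the claim uniformly.

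For the second part, since $\gcd(P^k,g)=P^k$ forces $g\equiv 0$, the eigenvalue $\lambda_{P^k}(P^k,D)$ is the one attached to the trivial character, which for any Cayley graph equals $|S_D|$, the size of the connection set, i.e. the regularity degree of $G_{P^k}(D)$; alternatively one reads it off the formula with $t=k$, where $\chi(P^k,D,k)=0$ (as no $k_i$ equals $-1$) and the sum ranges over all $d\in D$, giving $\sum_{d\in D}\varphi(P^{k-k_i})=|S_D|$. That this is the largest eigenvalue is the standard fact that for a connected regular graph the degree is the spectral radius, and it also follows directly from the monotonicity of the first part by taking $v=k$ as the maximal index.
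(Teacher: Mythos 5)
Your overall approach is the same as the paper's: read everything off \cref{eq:formula_for_lambda}, using that the sign of $\lambda_{P^t}(P^k,D)$ is governed by $\chi(P^k,D,t)$ and that $(|P|-1)\sum_{i=0}^{t-1}|P|^i=|P|^t-1<|P|^t$. The like-sign comparisons and the second assertion (the eigenvalue at the trivial character is $|S_D|$, hence the degree, hence the spectral radius of a regular graph) are fine. The gap is in the mixed-sign case, which you correctly flag as the main obstacle but do not actually close: the squeeze you propose is too weak. Writing $S_t=(|P|-1)\sum_{k_i\geq k-t}|P|^{k-k_i-1}$, your bounds give $|\lambda_{P^u}|=S_u\leq |P|^u-1$ when $\lambda_{P^u}\geq 0$ and $|\lambda_{P^v}|=|P|^v-S_v\geq 1$ when $\lambda_{P^v}<0$; since $|P|^u-1>1$ as soon as $u\geq 1$, these two monotone envelopes do not imply $S_u\leq |P|^v-S_v$. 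No amount of case analysis on top of the bounds $1\leq|\lambda_{P^t}|\leq|P|^t$ alone can rescue this; you must use the value of $\chi$ at $u$, not just at $v$.

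The missing observation is that the increment $S_v-S_u$ is a sum of distinct terms $(|P|-1)|P|^j$ with $j\in\{u,\dots,v-1\}$, and the term $j=u$ occurs precisely when $P^{k-u-1}\in D$, i.e.\ when $\chi(P^k,D,u)=1$. So if $\lambda_{P^u}\geq 0$ (hence $\chi(P^k,D,u)=0$) and $\lambda_{P^v}<0$, then $S_v-S_u\leq |P|^v-|P|^{u+1}$, whence
\[
|\lambda_{P^v}|=|P|^v-S_v\geq |P|^{u+1}-S_u\geq (|P|-1)|P|^u+1>|P|^u-1\geq S_u=|\lambda_{P^u}|.
\]
Symmetrically, if $\lambda_{P^u}<0$ then $\chi(P^k,D,u)=1$ forces the term $(|P|-1)|P|^u$ to appear in $S_v$, so $|\lambda_{P^v}|\geq S_v-\chi(P^k,D,v)|P|^v+ \dots$ reduces in the relevant subcase to $S_v\geq(|P|-1)|P|^u\geq|P|^u\geq|P|^u-S_u=|\lambda_{P^u}|$ (and the both-negative case follows from $S_v-S_u\leq|P|^v-|P|^u$). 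With these two refinements your argument is complete; also note a small slip: in the negative case $|\lambda_{P^t}|$ can equal $|P|^t$ (take $D=\{P^{k-t-1}\}$), not at most $|P|^t-1$, and your appeal to the first part with ``$v=k$'' is outside the stated range $v<k$, so the second claim should rest on the trivial-character/regularity argument as you also indicate.
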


\begin{proof}
    This statement follows directly from \cref{eq:formula_for_lambda} and the fact that for each $t \geq 1$, we have 
    \[ |P|^t > (|P|-1) \sum_{i=0}^{t-1} |P|^i.
    \qedhere\]
\end{proof}

\begin{rem}
    The proof for \cite[Corollary 2.1]{sander2015so} is somewhat more complicated because the authors allow loops in $G_{P^k}(D)$, i.e., they consider the possibility that $P^k \in D.$
\end{rem}
We now discuss some corollaries of \cref{prop:estimate}. The first corollary concerns the largest eigenvalue in $G_{f}(D).$

\begin{cor} \label{cor:largest_eigenvalues}
    Let $D_1, D_2$ be two subsets of $\Div(P^k)$ such that $G_{P^{k}}(D_1)$ and $G_{P^k}(D_2)$ have the same largest eigenvalue. Then $D_1 = D_2.$
\end{cor}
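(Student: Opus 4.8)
The plan is to read off $D$ from the largest eigenvalue by exploiting the uniqueness of base-$|P|$ representations. By \cref{prop:estimate}, the largest eigenvalue of $G_{P^k}(D)$ is $\lambda_{P^k}(P^k,D)$, the degree of the graph, which corresponds to the case $t=k$ (i.e. $g=0$). First I would substitute $t=k$ into the formula of \cref{prop:spectra_prime_power}. Since $\chi(P^k,D,k)=0$ (it would require $k_i=k-k-1=-1$, which is impossible as $k_i\geq 0$), this yields
\[
\lambda_{P^k}(P^k,D) = (|P|-1)\sum_{i=1}^{s}|P|^{k-k_i-1}.
\]

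Next I would divide the largest eigenvalue by $|P|-1$ to obtain $\sum_{i=1}^{s}|P|^{k-k_i-1}$. Because $0\leq k_1<k_2<\cdots<k_s<k$, the exponents $k-k_i-1$ are pairwise distinct and lie in $\{0,1,\ldots,k-1\}$, so this sum is precisely the integer whose base-$|P|$ expansion has digit $1$ in positions $k-k_i-1$ and digit $0$ elsewhere. By uniqueness of the base-$|P|$ representation, the numerical value of this sum determines the set of exponents $\{k-k_i-1 : 1\leq i\leq s\}$, hence the set $\{k_1,\ldots,k_s\}$, and therefore $D$ itself.

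Putting these together: if $G_{P^k}(D_1)$ and $G_{P^k}(D_2)$ share the same largest eigenvalue, then after dividing by $|P|-1$ the two associated sums of distinct powers of $|P|$ are equal, which forces the two exponent sets to coincide, giving $D_1=D_2$. The argument is essentially self-contained once the formula for $\lambda_{P^k}(P^k,D)$ is in hand; the only point requiring care is checking that the base-$|P|$ digits really are $0$ or $1$, equivalently that the exponents $k-k_i-1$ are pairwise distinct, which is immediate from the strict inequalities $k_1<\cdots<k_s$. I do not anticipate a serious obstacle here: the crux is simply recognizing that the degree $|S_D|=\sum_{i=1}^{s}\varphi(P^{k-k_i})$ encodes $D$ bijectively through this $0/1$-digit expansion.
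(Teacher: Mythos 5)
Your proposal is correct and follows essentially the same route as the paper: both reduce the claim to the fact that the largest eigenvalue (the degree) equals $(|P|-1)\sum_{i}|P|^{k-k_i-1}$, a sum of distinct powers of $|P|$ up to the factor $|P|-1$, and hence determines the exponent set $\{k_1,\dots,k_s\}$. The only cosmetic difference is that you invoke uniqueness of base-$|P|$ representations directly, while the paper verifies that uniqueness by hand with an inequality on the largest index where the two exponent sequences differ.
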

\begin{proof}
    Let $D_1=\{P^{k_1},P^{k_2}, \ldots, P^{k_s}\}$ and $D_2 = \{P^{h_1}, P^{h_2}, \ldots, P^{h_t}\}$. Suppose that $G_{P^k}(D_1)$ and $G_{P^k}(D_2)$ have the same largest eigenvalue. Then  $\sum_{i=1}^s |P|^{k-k_i} = \sum_{i=1}^t |P|^{k-h_i}.$ Let $u$ be the last index such that $k_u \neq h_u$. Then we have 
    \[ \sum_{i=u}^s |P|^{k-k_i} = \sum_{i=u}^t |P|^{k-h_i}.\]
If $u=s$ or $u=t$, then we must have $s=t$ and $D_1 = D_2.$ Otherwise, suppose that $u < \min(s,t)$. Without loss of generality, we can also assume that $k-k_{u}>k-h_u.$ We have then 
\[ \sum_{i=u}^s |P|^{k-k_i} \geq |P|^{k-k_u} \geq |P|^{k-h_u+1} \geq \sum_{i=0}^{k-h_u}|P|^i \geq \sum_{i=u}^t |P|^{k-h_i}. \]
We conclude that $D_1= D_2.$
\end{proof}

\begin{rem}
    Similar to the case of gcd-graphs over $\Z$ (see \cite[Section 1]{sander2015so} for an example in this case), \cref{cor:largest_eigenvalues} is false if $f$ is not a prime power. For example, let $f=x^2(x+1) \in \F_3[x]$, $D_1 = \{1\}$ and $D_2 = \{x, x+1, x(x+1) \}.$ Then the characteristic polynomials of $G_{f}(D_1)$ and $G_{f}(D_2)$ are respectively 
    \[ (x - 12) (x - 3)^4 (x + 6)^4 x^{18}, \]
    and 
    \[ (x - 12) (x - 6)^2 (x - 3)^2  (x + 3)^{10}  x^{12} .\]
    We see that $G_{f}(D_1)$ and $G_{f}(D_2)$ have the same largest eigenvalue, which is 12, but they are not isomorphic since they do not have the same spectrum.
\end{rem}

We discuss another corollary of \cref{prop:estimate}, which we found by investigating our numerical data. 
\begin{cor} \label{cor:at_least_two_eigs}
    Suppose that $D \neq \emptyset$. Then $G_{P^k}(D)$ has at least two distinct eigenvalues. Furthermore, $G_{P^k}(D)$ has exactly two eigenvalues if and only if 
    \[ (k_1, k_2, \ldots, k_s) = (k_1, k_1+1, \ldots, k_1+s-1). \]
    In this case, $G_{P^k}(D)$ is the disjoint union of $|P|^{k_1}$ copies of $K_{m}$ where $m=|P|^{k-k_1}$ and its eigenvalues are $-1$ and $|P|^{k-k_1}-1.$
\end{cor}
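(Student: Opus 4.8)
The plan is to prove \cref{cor:at_least_two_eigs} directly from the explicit spectral formula in \cref{prop:spectra_prime_power}, treating the three assertions (at least two eigenvalues; the exact-two characterization; the graph-theoretic description) in turn. First I would record that as $g$ ranges over $\F_q[x]/P^k$ the value $\lambda_g(P^k,D)$ depends only on $t=v_P(\gcd(P^k,g))$, so the distinct eigenvalues are exactly the numbers $\lambda_{P^t}(P^k,D)$ for $0\le t\le k$. For the first claim, I would compare the two extreme cases. When $t=k$ we get $g=0$ and $\lambda_{P^k}(P^k,D)=\sum_{i=1}^s\varphi(P^{k-k_i})$, the degree of the graph, which is strictly positive since $D\ne\emptyset$. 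By \cref{cor:exactly_one}, either $0$ or $-1$ occurs as an eigenvalue (namely $\lambda_{P^0}$), and in either case this value is $\le 0 <$ degree, so at least two distinct eigenvalues always appear.

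Next I would handle the characterization of exactly two eigenvalues. The idea is that the chain of inequalities $|\lambda_{P^0}|\le|\lambda_{P^1}|\le\cdots\le|\lambda_{P^k}|$ from \cref{prop:estimate}, combined with the sign information in \eqref{eq:formula_for_lambda}, forces the eigenvalue to take only two values precisely when the jumps $\lambda_{P^{t}}\to\lambda_{P^{t+1}}$ are mostly trivial. Concretely, using \eqref{eq:formula_for_lambda} I would observe that $\lambda_{P^t}(P^k,D)$ is unchanged as $t$ increases through a range in which no new divisor condition is triggered, i.e. as long as neither a new term $\varphi(P^{k-k_i})$ enters the sum nor the indicator $\chi(P^k,D,t)$ switches. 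The set of values therefore collapses to two exactly when, after the initial value $\lambda_{P^0}$, every subsequent increment of $t$ up to $k$ produces the \emph{same} nonzero value; writing this out via \eqref{eq:formula_for_lambda} pins down that the exponents $k_i$ present in $D$ must fill a contiguous top block $k_1,k_1+1,\dots,k-1$. I would verify both directions: if $(k_1,\dots,k_s)=(k_1,k_1+1,\dots,k-1)$ then a short computation with the geometric sum shows $\lambda_{P^t}$ is constant for $t>k_1$ and equals one fixed value, while $\lambda_{P^t}=0$ for $t\le k_1$, giving exactly two eigenvalues; conversely if the exponents are not a contiguous top block, I would exhibit an intermediate $t$ at which \eqref{eq:formula_for_lambda} yields a third distinct value.

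For the final graph-theoretic statement I would invoke \cref{prop:connected_prime_power}: when $(k_1,\dots,k_s)=(k_1,k_1+1,\dots,k-1)$, the graph has $|P|^{k_1}$ connected components, each isomorphic to $G_{P^{k-k_1}}(D')$ with $D'=\{1,P,\dots,P^{k-k_1-1}\}=\Div(P^{k-k_1})$. Since $D'$ is the full set of proper divisors, every pair of distinct vertices in a component is adjacent, so each component is the complete graph $K_m$ on $m=|P|^{k-k_1}$ vertices. Alternatively this also follows by iterating \cref{thm:wreath_product_prime_power}, which turns the contiguous-block condition into repeated wreath products with $K_{|P|}$.

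The main obstacle I anticipate is the converse direction of the exact-two characterization: showing that \emph{any} gap in the top block of exponents forces a genuinely third eigenvalue. This requires carefully choosing the right value of $t$ at a gap and using the strict inequality $|P|^t>(|P|-1)\sum_{i=0}^{t-1}|P|^i$ from \cref{prop:estimate} to guarantee that the partial sum $\sum_{k_i\ge k-t}|P|^{k-k_i-1}$ lands strictly between the two putative eigenvalues rather than coinciding with one of them; bookkeeping the base-$|P|$ digit structure of these sums (as in \cref{cor:possible_eigen}) is where the argument is most delicate.
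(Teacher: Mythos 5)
Your strategy for the hard (converse) direction is essentially the paper's: at a gap the paper takes $t=k-k_{i+1}$ where $k_{i+1}-k_i>1$, notes $\chi(P^k,D,t)=0$, and obtains a positive eigenvalue $(|P|-1)\sum_{j\ge i+1}|P|^{k-k_j-1}$ strictly smaller than the degree, which is precisely the ``third value at a gap'' you describe; the digit/geometric-sum comparison you anticipate is exactly what makes the two positive values distinct. For the first assertion the paper argues slightly differently (the degree is a positive eigenvalue and the trace is zero, so a negative eigenvalue exists), but your route via \cref{cor:exactly_one} works just as well. Your treatment of the last assertion via \cref{prop:connected_prime_power} --- each component is $G_{P^{k-k_1}}(D')$ with $D'=\Div(P^{k-k_1})$ the full set of proper divisors, hence complete --- is in fact spelled out more carefully than in the paper, which asserts that part without proof.

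There is, however, one concrete error in your forward direction: when $(k_1,\dots,k_s)=(k_1,k_1+1,\dots,k-1)$ you claim $\lambda_{P^t}=0$ for $t\le k_1$ and a constant positive value for $t>k_1$. This is false, and it contradicts both \cref{prop:eigen_zero} (here $P^{k-1}\in D$, so $0$ is \emph{not} an eigenvalue) and your own final claim that the graph is a disjoint union of copies of $K_m$, whose spectrum is $\{m-1,-1\}$. The correct computation from \eqref{eq:formula_for_lambda} is: for $0\le t<k-k_1$ the indicator $\chi(P^k,D,t)$ equals $1$ and the geometric sum gives $\lambda_{P^t}=-|P|^t+(|P|^t-1)=-1$, while for $t\ge k-k_1$ one gets $\lambda_{P^t}=|P|^{k-k_1}-1$. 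So the conclusion (exactly two values) survives, but the threshold is $t=k-k_1$, not $t=k_1$, and the small eigenvalue is $-1$, not $0$. Relatedly, make sure your gap analysis also covers a gap at the top, i.e.\ $k_s<k-1$: there $\lambda_{P^0}=0$ already forces a third eigenvalue (two eigenvalues $0$ and the degree would give positive trace). The paper's written proof only treats internal gaps $k_{i+1}-k_i>1$, so handling this case explicitly would make your argument more complete than the published one.
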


\begin{proof}
We know that the largest eigenvalue of $G_{P^k}(D)$ is its degree which is 
\[ \lambda_{P^k}(P^k,D) =(|P|-1) \sum_{i} |P|^{k-k_i-1} >0 . \]
Furthermore, since the sum of all eigenvalues of $G_{P^k}(D)$ is $0$, there must be an eigenvalue that is negative. Therefore, $G_{P^k}(D)$ must have at least two distinct eigenvalues. We note that, this part holds true for all undirected simple graphs.

Suppose that $G_{P^k}(D)$ has exactly two eigenvalues. We need to show that $k_{i+1}-k_i = 1$ for $1 \leq i \leq s-1.$ Suppose that this is not the case. Then, we can find $1 \leq i \leq s-1$ such that $k_{i+1}-k_{i} >1.$ We then see that $\chi(P^k, D, k-k_{i+1})=0$ since there is no $j$ such that $k_j = k-(k-k_{i+1})-1 = k_{i+1}-1.$ We then see that 
\[ \lambda_{P^{k-k_{i+1}}}(P^k,D) = (|P|-1) \sum_{j \geq i+1} |P|^{k-k_j-1}.\]
Since $\lambda_{P^{k-k_{i+1}}}(P^k, D)>0$ and $G_{P^k}(D)$ has exactly two eigenvalues, we must have $\lambda_{P^{k-k_{i+1}}}(P^k, D) = \lambda_{P^k}(P^k,D)$. Therefore 
$\sum_{i} |P|^{k-k_i-1} =  \sum_{j \geq i+1} |P|^{k-k_j-1}$, which is impossible. We conclude that $k_{i+1}-k_i = 1$ for all $1 \leq i \leq s-1.$

Finally, we remark that $P^{k-1}$ must be in $D.$ In fact, if this is not the case then by \cref{prop:eigen_zero}, $0$ would be another eigenvalue of $G_{P^k}(D).$

Conversely, if $(k_1, k_2, \ldots, k_s) = (k_1, k_1+1, \ldots, k_1+s-1)$, by \cref{prop:connected_prime_power}, we can see that  $G_{P^k}(D)$ is the disjoint union of $|P|^{k_1}$ copies of $K_{m}$ where $m=|P|^{k-k_1}.$ Therefore, it has exactly two eigenvalues namely $-1$ and $|P|^{k-k_1}-1.$
    \end{proof}

We now conclude this section with the main theorem, which says that $D$ is determined by the spectrum of $G_{P^k}(D).$

\begin{thm}
    Let $D_1, D_2$ be two subsets of $\Div(P^k)$. Then the following statements are equivalent. 

    \begin{enumerate}
        \item $D_1 =D_2.$
        \item $G_{P^k}(D_1)$ and $G_{P^k}(D_2)$ are isomorphic. 
        \item $G_{P^k}(D_1)$ and $G_{P^k}(D_2)$ are isospectral. 
    \end{enumerate}
\end{thm}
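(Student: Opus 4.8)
The plan is to prove the chain of equivalences $(1) \Rightarrow (2) \Rightarrow (3) \Rightarrow (1)$, exploiting the fact that $f = P^k$ is a prime power so that the explicit spectral formula of \cref{prop:spectra_prime_power} is available. The implication $(1) \Rightarrow (2)$ is immediate: equal generating sets yield literally the same Cayley graph, hence isomorphic graphs. The implication $(2) \Rightarrow (3)$ is also standard, since isomorphic graphs have the same adjacency spectrum. Thus the entire content lies in the implication $(3) \Rightarrow (1)$, which asserts that the multiset of eigenvalues determines $D$ in the prime-power case.

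For $(3) \Rightarrow (1)$, I would argue directly from the structure of the spectral vector rather than invoking the general invertibility result of the earlier proposition, since for prime powers the eigenvalues have a clean combinatorial form. Write $D_i$ via its exponent set as in the section's running notation. The key observation is that by \cref{prop:estimate}, the eigenvalues $\lambda_{P^t}(P^k, D)$ are monotone in absolute value as $t$ increases, and by \cref{prop:spectra_prime_power} each such eigenvalue is an explicit expression in the $|P|$-adic digits recording membership of $P^{k-k_i}$ in $D$. Concretely, I would show that from the full sorted multiset of eigenvalues one can reconstruct, for each $t$ from $0$ up to $k$, the value $\lambda_{P^t}(P^k, D)$, because these are exactly the distinct eigenvalue \emph{values} arising (each with a predictable multiplicity coming from the number of $g$ with $\gcd(P^k,g) = P^t$, namely $\varphi(P^{k-t})$). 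Once the ordered tuple $(\lambda_{P^0}, \lambda_{P^1}, \ldots, \lambda_{P^{k-1}})$ is recovered, \cref{cor:possible_eigen} (or directly the base-$|P|$ digit reading in its proof) lets me read off precisely which exponents $k_i$ occur, hence recover $D$.

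The cleanest route to $(3) \Rightarrow (1)$, which I would actually write, avoids even this reconstruction by reducing to the already-proved \cref{thm:same_spectral_vector}. Isospectrality means $G_{P^k}(D_1)$ and $G_{P^k}(D_2)$ have the same multiset of eigenvalues; I must upgrade this to equality of the full spectral \emph{vector} $\vec{\lambda}(P^k, D_1) = \vec{\lambda}(P^k, D_2)$ as indexed by $g$. For prime powers this upgrade is automatic because an eigenvalue $\lambda_g$ depends only on $t$ where $P^t = \gcd(P^k, g)$, and the multiplicity with which the value $\lambda_{P^t}$ appears, together with the monotonicity of \cref{prop:estimate}, pins down which $t$ each eigenvalue corresponds to. Thus the two spectral vectors agree position-by-position after sorting, and \cref{thm:same_spectral_vector} gives $D_1 = D_2$ directly.

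The main obstacle is the recovery step inside $(3) \Rightarrow (1)$: sorting by absolute value via \cref{prop:estimate} establishes a nondecreasing order $|\lambda_{P^0}| \le |\lambda_{P^1}| \le \cdots$, but I must be careful that this ordering is \emph{strict} enough, or that the multiplicities $\varphi(P^{k-t})$ are distinct enough, to unambiguously assign each eigenvalue value to its index $t$. If two consecutive $\lambda_{P^t}$ happen to coincide in absolute value, I would break the tie using the sign information from \cref{prop:congruence} together with the distinct multiplicities $\varphi(P^{k-t})$, which are strictly increasing in $t$; this guarantees that even equal eigenvalue values attach to a determined index, completing the reduction to \cref{thm:same_spectral_vector}.
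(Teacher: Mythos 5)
Your implications $(1)\Rightarrow(2)\Rightarrow(3)$ are fine, but the step $(3)\Rightarrow(1)$ has a genuine gap, and it sits exactly where you flag ``the main obstacle''. Both of your routes rest on the claim that the unordered multiset of eigenvalues determines the position-indexed tuple $(\lambda_{P^0},\lambda_{P^1},\ldots,\lambda_{P^k})$, and the tie-breaking you offer does not establish this. First, the multiplicity attached to the index $t$ is $\varphi(P^{k-t})$, which is \emph{decreasing} in $t$, not increasing, and these multiplicities are not all distinct: for $|P|=2$ one has $\varphi(1)=\varphi(P)=1$. Second, even granting the monotonicity of \cref{prop:estimate}, ties in absolute value genuinely occur (for instance $|P|=2$ and $D=\{1\}$ give $\lambda_{P^{k-1}}=-2^{k-1}$ and $\lambda_{P^k}=2^{k-1}$), and precisely when $|P|=2$ the congruence of \cref{prop:congruence} is vacuous, so it cannot supply the sign information you invoke. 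Assigning signs to the indices inside a tied block then becomes a subset-sum problem in the multiplicities $\varphi(P^{k-t})$, and these admit coinciding subset sums ($1+1=2$), so an ambiguous assignment is not ruled out by anything you have written. The reconstruction may well be completable, but it requires a careful case analysis that is absent.

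The irony is that the section already contains a tool that makes all of this unnecessary, and it is the paper's actual proof: \cref{cor:largest_eigenvalues} says that in the prime-power case the largest eigenvalue alone determines $D$, and by \cref{prop:estimate} that largest eigenvalue is $\lambda_{P^k}(P^k,D)$, the degree of the graph. Isospectral graphs certainly share their largest eigenvalue, so $(3)\Rightarrow(1)$ follows in one line from \cref{cor:largest_eigenvalues}, with no need to recover the full spectral vector or to invoke \cref{thm:same_spectral_vector} at all. Either close the reconstruction gap described above, or---much better---replace your argument for $(3)\Rightarrow(1)$ by this direct appeal to the largest eigenvalue.
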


\begin{proof}
    By definition $(1) \implies (2) \implies (3).$ Let us show that $(3) \implies (1)$. In fact, suppose that $G_{P^k}(D_1)$ and $G_{P^k}(D_2)$ are isospectral. Then, in particular, they share the same largest eigenvalue. By \cref{cor:largest_eigenvalues}, we must have $D_1 = D_2.$
\end{proof}

\section{Isomorphic gcd-graphs} \label{sec:isomorphic_gcd}
In this section, we provide several constructions of isomorphic gcd-graphs over $\F_q[x]$ that have different generating sets.
\subsection{Isomorphic unitary Cayley graphs in the family $\F_q[x]/f$} 
Let $R$ be a finite commutative ring. The unitary Cayley graph $G_R$ on $R$ is defined as the graph whose vertex set is $R$ and two vertices $a,b$ are adjacent if $a-b \in R^{\times}.$ Let $\Rad(R)$ be the Jacobson radical of $R$ and $R^{\s} = R/\Rad(R)$ be the semi-simplification of $R.$ In \cite[Section 4]{chudnovsky2024prime}, it is shown that $\Rad(R)$ is a homogeneous set in $R$. Furthermore, $G_{R}$ is isomorphic to the wreath product $G_{R^{\s}} * E_m$ where $m = |\Rad(R)|.$ In \cite[Theorem 5.3]{kiani2012unitary}, Kiani and Aghaei show that for two commutative rings $R_1,R_2$, if $G_{R_1} \cong G_{R_2}$, then $R_1^{\s} \cong R_2^{\s}$. Combining these two facts, we have the following.

\begin{prop} \label{prop:isomorphic_criterior}
Suppose that $R_1, R_2$ are two commutative rings. Then the following conditions are equivalent: 
    \begin{enumerate}
        \item $G_{R_1} \cong G_{R_2}$
        \item $|R_1| = |R_2|$ and $R_{1}^{\s} \cong R_2^{\s}.$
    \end{enumerate}
\end{prop}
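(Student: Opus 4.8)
The plan is to establish the two implications separately, using the two structural facts recalled immediately before the statement: that $G_R \cong G_{R^{\s}} * E_m$ with $m = |\Rad(R)|$, and the Kiani--Aghaei theorem asserting that $G_{R_1} \cong G_{R_2}$ implies $R_1^{\s} \cong R_2^{\s}$.

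First I would handle $(1) \Rightarrow (2)$, which is essentially immediate. If $G_{R_1} \cong G_{R_2}$, then comparing vertex sets gives $|R_1| = |V(G_{R_1})| = |V(G_{R_2})| = |R_2|$, and the Kiani--Aghaei theorem yields $R_1^{\s} \cong R_2^{\s}$ directly. Both clauses of $(2)$ follow with no further work.

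For $(2) \Rightarrow (1)$, I would argue as follows. Write $m_i = |\Rad(R_i)|$. Since $\Rad(R_i)$ is an ideal and hence an additive subgroup, one has $|R_i| = |R_i^{\s}| \cdot m_i$. The hypotheses $|R_1| = |R_2|$ together with $R_1^{\s} \cong R_2^{\s}$ (so in particular $|R_1^{\s}| = |R_2^{\s}|$) then force $m_1 = m_2$. A ring isomorphism $R_1^{\s} \cong R_2^{\s}$ preserves the set of units and the additive structure, hence induces a graph isomorphism $G_{R_1^{\s}} \cong G_{R_2^{\s}}$; combining this with $E_{m_1} = E_{m_2}$ and the fact that the wreath product respects isomorphism in each factor, the first recalled fact gives
\[
G_{R_1} \cong G_{R_1^{\s}} * E_{m_1} \cong G_{R_2^{\s}} * E_{m_2} \cong G_{R_2},
\]
as desired.

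The argument is short because the genuinely hard work is carried by the two cited inputs. The only points requiring care---and the closest thing to an obstacle---are the cardinality bookkeeping $|R_i| = |R_i^{\s}| \cdot m_i$ (which pins down $m_1 = m_2$) and the verification that a ring isomorphism of the semisimplifications induces an isomorphism of their unitary Cayley graphs. Both are routine, but I would state them explicitly so that each link in the chain of isomorphisms is justified.
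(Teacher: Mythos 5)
Your proposal is correct and follows exactly the route the paper intends: the paper gives no written proof beyond ``combining these two facts,'' meaning the Kiani--Aghaei theorem for $(1)\Rightarrow(2)$ and the decomposition $G_R \cong G_{R^{\s}} * E_{|\Rad(R)|}$ plus the cardinality bookkeeping for $(2)\Rightarrow(1)$. You have simply filled in the details the authors left implicit, so there is nothing to add.
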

We will use \cref{prop:isomorphic_criterior} to classify isomorphism classes of unitary graphs in the family $\F_q[x]/f$ where $f$ is a monic polynomial of fixed degree $n.$  We remark that, $G_{\F_q[x]/f}$ is nothing but $G_{f}(\{1 \}).$ Let $\rad(f)$ be the radical of $f$, i.e., the product of all distinct prime factors of $f$. Then the Jacobson radical of $\F_q[x]/f$ is precisely the ideal generated by $\rad(f).$ Consequently, 
\[ (\F_q[x]/f)^{\s} \cong \F_q[x]/\rad(f).\]
By \cref{prop:isomorphic_criterior}, the isomorphism class of $G_{f}(\{1\})$ depends only on $\rad(f)$. We remark, however, that two polynomials with different radicals can still give rise to isomorphic unitary graphs. For instance, take the following examples $f_1 = x^2, f_2 = (x+1)^2.$ Then $\rad(f_1)=x, \rad(f_2) = x+1$, but 
\[ \F_q[x]/\rad(f_1) \cong \F_q[x]/\rad(f_2) \cong \F_q.\]
Consequently, $G_{f_1}(\{1\}) \cong G_{f_2}(\{1\})$ even though $f_1, f_2$ have different radicals. In general, Galois theory for $\F_q$ implies that if $g_1$ and $g_2$ are two irreducible polynomials of the same degree, then $\F_q[x]/g_1 \cong \F_q[x]/g_2.$ Motivated by this observation, we introduce the following definition. 

\begin{definition}
    Let $f \in \F_q[x]$ be a monic polynomial of degree $n$. We define the factorization type of $f$ as the $n$-tuple $(a_1, a_2, \ldots, a_n)$ where  
    $a_i$ is the number of distinct irreducible factors of $f$ of degree $i$. 
\end{definition}  

\begin{expl}
    Let $f = x^2(x+1)(x^2+1) \in \F_3[x]$. Then, the degree of $f$ is $5$ and the factorization type of $f$ is $(2, 1, 0, 0, 0)$.
\end{expl}

By \cref{prop:isomorphic_criterior} and the previous discussion, we have the following. 
 \begin{prop} \label{prop:isomorphic_unitary_criterior}
Let $f_1, f_2 \in \F_q[x]$ be two monic polynomials. The unitary graphs $G_{f_1}(\{1\})$ and $G_{f_2}(\{1\})$  are isomorphic if and only if $\deg(f_1) = \deg(f_2)$ and $f_1, f_2$ have the same factorization type. 
\end{prop}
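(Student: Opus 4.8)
The plan is to deduce everything from the ring-theoretic criterion of \cref{prop:isomorphic_criterior}, applied to the rings $R_1 = \F_q[x]/f_1$ and $R_2 = \F_q[x]/f_2$. Since $G_{f_i}(\{1\})$ is by definition the unitary Cayley graph $G_{R_i}$, that proposition tells us that $G_{f_1}(\{1\})$ and $G_{f_2}(\{1\})$ are isomorphic if and only if $|R_1| = |R_2|$ and $R_1^{\s} \cong R_2^{\s}$. Because $|R_i| = q^{\deg(f_i)}$ and $q$ is fixed throughout, the first condition $|R_1| = |R_2|$ is equivalent to $\deg(f_1) = \deg(f_2)$. It therefore remains only to show that the second condition $R_1^{\s} \cong R_2^{\s}$ is equivalent to $f_1$ and $f_2$ having the same factorization type.

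To this end, I would make the semisimplification completely explicit. As observed just before the statement, the Jacobson radical of $\F_q[x]/f$ is the ideal generated by $\rad(f)$, so $R^{\s} \cong \F_q[x]/\rad(f)$. Writing $\rad(f)$ as the product of the distinct monic irreducible factors $P$ of $f$ and applying the Chinese Remainder Theorem gives
\[ R^{\s} \cong \F_q[x]/\rad(f) \cong \prod_{P \mid f} \F_q[x]/P \cong \prod_{i=1}^{n} \left( \F_{q^i} \right)^{a_i}, \]
where $(a_1, \ldots, a_n)$ is the factorization type of $f$; here I use that $\F_q[x]/P \cong \F_{q^{\deg(P)}}$ together with the fact that, by definition, $a_i$ counts the distinct irreducible factors of $f$ of degree $i$. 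Thus $R^{\s}$ is a finite product of finite fields whose multiset of cardinalities is recorded precisely by the factorization type.

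The final step, and the only genuinely non-formal ingredient, is the uniqueness of this decomposition: a finite commutative semisimple ring is a product of fields in an essentially unique way (the factors correspond to the minimal idempotents, equivalently to the connected components of its spectrum). Since two finite fields are isomorphic as rings exactly when they have the same cardinality, and $q^i = q^j$ forces $i = j$, the isomorphism type of $\prod_{i}(\F_{q^i})^{a_i}$ both determines and is determined by the tuple $(a_i)$. Hence $R_1^{\s} \cong R_2^{\s}$ if and only if $f_1$ and $f_2$ have the same factorization type, which, combined with the degree condition, yields the claim.

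I would emphasize that both hypotheses in the statement are genuinely independent, so neither can be dropped: the factorization type ignores multiplicities, so two polynomials can share a factorization type yet have different degrees (for instance $x^2$ and $x^3$ in $\F_q[x]$, both of type $(1,0,\ldots,0)$), and conversely equal degree alone does not fix the type. The main subtlety to handle carefully is thus the bookkeeping translating the purely ring-level condition $R_1^{\s}\cong R_2^{\s}$ into the combinatorial data $(a_1,\ldots,a_n)$; once the explicit CRT decomposition above is in place, this is immediate from uniqueness of the field factorization.
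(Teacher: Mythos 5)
Your proposal is correct and follows essentially the same route as the paper: apply \cref{prop:isomorphic_criterior} to $R_i=\F_q[x]/f_i$, translate $|R_1|=|R_2|$ into equality of degrees, and identify $R^{\s}\cong\F_q[x]/\rad(f)$ as a product of finite fields whose isomorphism type is exactly the factorization type. The paper leaves the CRT decomposition and the uniqueness of the product-of-fields decomposition implicit, so your write-up is just a more explicit version of the same argument.
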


We will now use \cref{prop:isomorphic_unitary_criterior} to study the number of isomorphism classes of unitary graphs in the family $\F_q[x]/f$ where $f \in \F_q[x]$ and $\deg(f)=n.$ \cref{tab:number_of_iso} shows this number for various values of $n$ and $q$. As we can observe from this dataset, once we fix $n$, the number of isomorphism classes seems to stabilize when $q$ gets bigger. In fact, we have the following proposition. 

\begin{table}[h!]
\centering
\large
\begin{tabular}{|c|c|c|c|c|c|c|c|c|}
\hline
 \diagbox{$n$}{$q$}   & 2 & 3 & 4 & 5 & 7 & 8 & 9 & 11 \\ 
\hline
1  & 1 & 1 & 1 & 1 & 1  & 1  & 1  & 1  \\ 
\hline
2  & 3 & 3 & 3 & 3 & 3  & 3  & 3  & 3  \\ 
\hline
3  & 4 & 5 & 5 & 5 & 5  & 5  & 5  & 5  \\ 
\hline
4  & 7 & 9 & 10 & 10 & 10  & 10  & 10  & 10  \\ 
\hline
5  & 9 & 12 & 13 & 14 & 14  & 14  & 14  & 14  \\ 
\hline
6  & 15 & 22 & 24 & 25 & 26  & 26  & 26  & 26  \\ 
\hline
\end{tabular}
\caption{The number of isomorphism classes of unitary graphs in the family $\F_q[x]/f$ where $\deg(f)=n$ for various values of $n$ and $q$}
\label{tab:number_of_iso}
\end{table}

\begin{prop} \label{prop:number_of_isomorphism}
    Let $n$ be a fixed number. Then, there exist constants $q_n, C_n$ depending only on $n$ such that if $q \geq q_n$, then the number of isomorphism classes of unitary graphs of the form $G_{f}(\{1 \})$ where $f \in \F_q[x]$ and $\deg(f)=n$ is exactly $C_n.$
\end{prop}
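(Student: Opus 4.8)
The plan is to translate the count of isomorphism classes into a purely combinatorial count of \emph{realizable factorization types}, and then to observe that this count stabilizes because the supply of irreducible polynomials of each degree grows without bound as $q\to\infty$.

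First I would reduce to counting types. By \cref{prop:isomorphic_unitary_criterior}, for monic $f_1,f_2$ with $\deg f_1=\deg f_2=n$, the graphs $G_{f_1}(\{1\})$ and $G_{f_2}(\{1\})$ are isomorphic if and only if $f_1$ and $f_2$ have the same factorization type. Hence the map sending a monic polynomial of degree $n$ to its factorization type induces a bijection between isomorphism classes of unitary graphs $G_f(\{1\})$ with $\deg f=n$ and the set of $n$-tuples $(a_1,\ldots,a_n)$ that occur as the factorization type of \emph{some} such $f$. So it suffices to count these occurring tuples.

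Next I would characterize occurrence combinatorially. Write $N_q(i)$ for the number of monic irreducible polynomials of degree $i$ over $\F_q$. A tuple $(a_1,\ldots,a_n)$ with all $a_i\geq 0$ is the type of a monic polynomial of degree exactly $n$ if and only if: (i) there are enough distinct irreducibles of each degree, i.e.\ $a_i\leq N_q(i)$ for all $i$; and (ii) writing $m=\sum_{i=1}^n i\,a_i$ for the degree obtained when every multiplicity equals $1$, one has $m\leq n$ and $n-m$ is a non-negative integer combination of $\{\,i : a_i\geq 1\,\}$ (so that multiplicities can be raised to reach degree $n$). The key point is that condition (ii) does not involve $q$ at all.

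Finally I would exploit the growth of $N_q(i)$. Any tuple satisfying (ii) obeys $i\,a_i\leq m\leq n$, hence $a_i\leq\lfloor n/i\rfloor\leq n$. Since $N_q(1)=q$ and $N_q(i)\geq \tfrac{1}{i}\bigl(q^i-2q^{i/2}\bigr)\to\infty$ for each fixed $i$, there is a threshold $q_n$, depending only on $n$, such that $N_q(i)\geq\lfloor n/i\rfloor$ for all $1\leq i\leq n$ whenever $q\geq q_n$. For such $q$, condition (i) is automatically met by every tuple satisfying (ii), so occurrence is equivalent to (ii) alone. As (ii) is $q$-independent, the set of occurring types is identical for all $q\geq q_n$; setting $C_n$ equal to the number of tuples satisfying (ii) finishes the proof. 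The only delicate point is the clean statement and verification of condition (ii)—in particular that a combinatorially admissible tuple is genuinely realized by distinct irreducibles once $q$ is large, which is exactly where the growth of $N_q(i)$ enters—while the estimate for $N_q(i)$ and the resulting threshold $q_n$ are standard.
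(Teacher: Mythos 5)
Your proof is correct and follows essentially the same route as the paper's: reduce to counting factorization types via \cref{prop:isomorphic_unitary_criterior}, bound each $a_i$ by $\lfloor n/i\rfloor\leq n$, and use the growth of the number of monic irreducibles of each degree (via Gauss's formula or the elementary lower bound) to make the count $q$-independent once $q\geq q_n$. If anything, your explicit condition (ii) characterizing which tuples are realizable as types of polynomials of degree exactly $n$ — requiring $n-\sum_i i\,a_i$ to lie in the numerical semigroup generated by $\{i: a_i\geq 1\}$ — is spelled out more carefully than in the paper, which leaves the multiplicity-raising step implicit.
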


In order to provide a proof for \cref{prop:number_of_isomorphism}, we recall that for fixed $m,q$, the number $S(m,q)$ of irreducible polynomials of degree $m$ over $\F_q[x]$ is given by the following Gauss's formula (see \cite{chebolu2011counting} for a proof of this formula using the inclusion-exclusion principle)
\begin{equation} \label{eq:number_of_irr}
S(m,q)=\frac{1}{m} \sum_{d \mid m} \mu(m/d)q^d.
\end{equation}
By \cref{eq:number_of_irr}, we see that for fixed $n$, $S(m,q)$ is asymptotically equivalent to $q^m/m$ for all $m \leq n.$ In particular, $S(m,q) \geq n$ for all $q$ sufficiently large. We can now give a proof for \cref{prop:number_of_isomorphism}

\begin{proof}
    Let $f \in \F_q[x]$ be a monic polynomial of degree $n$ and let $(a_1, a_2, \ldots,a_n)$ be its factorization type. Then we have 
    \[ \sum_{m=1}^n m a_m  \leq n.\]
    In particular, this shows that $a_m \leq n$ for all $1 \leq m \leq n.$ We can find a number $q_n$ such that for $q \geq q_n$, we have $n \leq  S(m,q)$. Consequently, $a_m \leq S(m,q)$ as well.  We then see that, as long as this condition is satisfied, there are sufficiently many irreducible
polynomials of each degree $m$ to realize every factorization type that occurs for a monic polynomial of degree $n$. Thus the collection
of possible factorization types is independent of $q$ for all $q\ge q_n$. 
\end{proof}

\begin{rem}
After the submission of this paper, we completed another paper studying the monotonicity of $S(m,q)$ with respect to both $q$ and $m$; see
\cite{chebolu2026analytic}. In particular, \cite[Theorem 5.5]{chebolu2026analytic} shows that
$ S(m,q)\geq S(1,q)=q$ unless $(m,q)=(2,2)$. Therefore, in \cref{prop:number_of_isomorphism}, we may take $q_n=n$, as suggested by
\cref{tab:number_of_iso}.
\end{rem}

\subsection{Isomorphic gcd-graphs}
In the previous section, we discussed the necessary and sufficient conditions for two unitary Cayley graphs to be isomorphic. In this section, we extend this further to the case of gcd-graphs. The case where $f$ is a prime power is studied exclusively in \cref{sec:prime_powers-II} so we will focus on the case where $f$ has at least two distinct factors in this section.  Since many of our insights arise from analyzing experimental data, we will begin this section with a concrete numerical example.

\begin{expl}
Let $f= x(x+1) \in \F_3[x].$  \cref{table:1} describes the isomorphism classes in the family of $G_{f}(D)$ where $D$ runs over the collection of subsets of $\Div(f)$. We remark that we group these graphs by their characteristic polynomials; i.e. by isospectral classes, but we have checked that elements in the same class are isomorphic as well. This is consistent with a conjecture of So for gcd-graphs over $\Z$ saying that two gcd-graphs $G_{f}(D_1)$ and $G_f(D_2)$ are isomorphic if and only if they are isospectral (see \cite{so2006integral}).

\begin{table}[h!]
\centering
\begin{tabular}{|c|c|}
\hline
\textbf{D} & \textbf{Characteristic polynomial of $G_{f}(D)$} \\
\hline
$\left[ \right]$ & $x^9$ \\
\hline
$ \left[ 1 \right], \left[ x, x + 1 \right] $ & $(x - 4)(x - 1)^4(x + 2)^4$ \\
\hline
$ \left[ x \right], \left[ x + 1 \right]$ & $(x - 2)^3(x + 1)^6$ \\
\hline
$\left[ 1, x \right], \left[ 1, x + 1 \right] $ & $(x - 6)(x + 3)^2x^6$ \\
\hline
$ \left[ 1, x, x + 1 \right] $ & $(x - 8)(x + 1)^8$ \\
\hline
\end{tabular}
\caption{Isomorphism classes in the family $G_{f}(D)$ for $f=x(x+1)$ }
\label{table:1}
\end{table}

Looking closely at the data, we have the following observation. While the isomorphism between $G_{f}(\{x\})$ and $G_{f}(\{x+1\})$ as well as the isomorphism between $G_{f}(\{1, x\})$ and $G_{f}(\{1, x+1\})$ are evident (via a substitution of variables), the isomorphism between $G_{f}(\{1\})$ and $G_{f}(\{x, x+1\})$ is less so. In fact, this isomorphism appears like a coincidence. More precisely, let us consider $f=x(x+1) \in \F_q[x]$ for a generic value of $q$. Then the degree $G_{f}(\{1\})$ is $(q-1)^2$ and the degree of $G_{f}(\{x,x+1\}) = 2(q-1).$
These two degrees are equal only in the case $q=3.$
\end{expl}

We discuss a more complicated example. 
\begin{expl} \label{expl:more_complicated_ex}
Let $f = x^2(x+1) \in \F_3[x].$ As expected, there are more isomorphism classes in this family. We remark, however, that similar to the previous example some isomorphisms depend on the fact that $q=3.$ More precisely, if we let $f = x^2(x+1) \in \F_5[x]$, then we no longer have an isomorphism between $G_{f}(\{1, x, x + 1\})$ and $G_{f}(\{1, x + 1, x^2, x (x + 1)\})$. On the other hand, some isomorphisms persist when we change $q$. For example, using the Python library NetworkX we find the following two phenomena.  

\begin{enumerate}
\item For every $q \leq 5$ and $f = x^2(x+1) \in \F_q[x]$, the graphs $G_{f}(\{1,x,x^2\})$ and \linebreak $G_{f}(\{1, x+1\})$ are isomorphic. 
\item For every $q \leq 5$ and $f= x^2(x+1) \in \F_q[x]$, the graphs $G_{f}(\{1, x+1, x^2\})$ and $G_{f}(\{1, x+1, x(x+1)\})$ are also isomorphic. 
\end{enumerate}

\begin{table}[h!]
\centering
\begin{tabular}{|c|c|}
\hline
\textbf{D} & \textbf{Characteristic polynomial of $G_{f}(D)$} \\
\hline
$\left[ \right]$ & $x^{27}$ \\
\hline
$\left[ 1 \right], \left[ x, x + 1, x^2 \right]$ & $(x - 12)(x - 3)^4(x + 6)^4x^{18}$ \\
\hline
$\left[ x \right], \left[ x^2, x(x + 1) \right] $ & $(x - 4)^3(x - 1)^{12}(x + 2)^{12}$ \\
\hline
$ \left[ 1, x \right] $ & $(x - 16)(x + 8)^2(x + 2)^8(x - 1)^{16}$ \\
\hline
$ \left[ x + 1 \right], \left[ x, x^2 \right], \left[ x, x(x + 1) \right] $ & $(x - 6)^3(x + 3)^6x^{18}$ \\
\hline
$\left[ 1, x + 1 \right], \left[ 1, x, x^2 \right] $ & $(x - 18)(x + 9)^2x^{24}$ \\
\hline
$\left[ x, x + 1 \right] $ & $(x - 10)(x - 4)^2(x + 5)^4(x + 2)^6(x - 1)^{14}$ \\
\hline
$ \left[ 1, x, x + 1 \right], \left[ 1, x + 1, x^2, x(x + 1) \right]$ & $(x - 22)(x + 5)^2(x - 1)^{12}(x + 2)^{12}$ \\
\hline
$ \left[ x^2 \right], \left[ x(x + 1) \right]$ & $(x - 2)^9(x + 1)^{18}$ \\
\hline
$\left[ 1, x^2 \right]$ & $(x - 14)(x + 4)^2(x + 7)^2(x - 2)^{10}(x + 1)^{12}$ \\
\hline
$ \left[ x + 1, x^2 \right]$ & $(x - 8)(x - 5)^2(x + 4)^4(x - 2)^6(x + 1)^{14}$ \\
\hline
$\left[ 1, x + 1, x^2 \right], \left[ 1, x + 1, x(x + 1) \right], \left[ 1, x, x^2, x(x + 1) \right]$ & $(x - 20)(x + 7)^2(x - 2)^6(x + 1)^{18}$ \\
\hline
$\left[ 1, x, x + 1, x^2 \right], \left[ 1, x, x + 1, x(x + 1) \right]$ & $(x - 24)(x + 3)^8x^{18}$ \\
\hline
$\left[ 1, x(x + 1) \right], \left[ x, x + 1, x^2, x(x + 1) \right] $ & $(x - 14)(x - 5)^4(x + 4)^4(x + 1)^{18}$ \\
\hline
$\left[ 1, x, x(x + 1) \right] $ & $(x - 18)(x + 6)^2(x - 3)^4(x + 3)^6x^{14}$ \\
\hline
$\left[ x + 1, x(x + 1) \right], \left[ x, x^2, x(x + 1) \right] $ & $(x - 8)^3(x + 1)^{24}$ \\
\hline
$ \left[ x, x + 1, x(x + 1) \right] $ & $(x - 12)(x - 6)^2(x - 3)^2(x + 3)^{10}x^{12}$ \\
\hline
$\left[ 1, x^2, x(x + 1) \right] $ & $(x - 16)(x + 5)^2(x - 4)^4(x - 1)^6(x + 2)^{14}$ \\
\hline
$ \left[ x + 1, x^2, x(x + 1) \right]$ & $(x - 10)(x - 7)^2(x - 1)^8(x + 2)^{16}$ \\
\hline
$ \left[ 1, x, x + 1, x^2, x(x + 1) \right] $ & $(x - 26)(x + 1)^{26}$ \\
\hline
\end{tabular}
\caption{Isomorphism classes in the family $G_{f}(D)$ for $f=x^2(x+1) \in \F_3[x]$ }
\end{table}
\end{expl}

We tried various other examples and found the following statement which explains the first phenomenon described in \cref{expl:more_complicated_ex}.
\begin{prop} \label{prop:isomorphic_construction}
    Let $f = f_1 f_2$  where  $\gcd(f_1, f_2)=1.$ Assume further that $\rad(f_1)$ and $\rad(f_2)$ have the same factorization type. 
    Let 
    \[ D_1 = \Div(f_1), \quad  D_2 = \Div(f_2)  .\]
    Then $G_{f}(D_1)$ and $G_f(D_2)$ are isomorphic. In fact, they are both isomorphic to the wreath product $G_{\F_q[x]/\rad(f_1)}(\{1\}) * E_{m}$ where 
    $m= \dfrac{|f|}{|\rad(f_1)|}.$
\end{prop}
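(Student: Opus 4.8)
The plan is to use the Chinese Remainder Theorem to reduce the adjacency conditions defining $G_f(D_1)$ and $G_f(D_2)$ to conditions on a single coordinate, to recognize the resulting graphs as wreath products, and then to feed this into the structure theory of unitary Cayley graphs recalled just before \cref{prop:isomorphic_unitary_criterior}, together with the factorization-type hypothesis.

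First I would fix the isomorphism $\F_q[x]/f \cong \F_q[x]/f_1 \times \F_q[x]/f_2$ coming from $\gcd(f_1,f_2)=1$ and write each vertex as a pair $(a_1,a_2)$. Since the gcd is multiplicative over coprime factors, $\gcd(a-b,f)=\gcd(a_1-b_1,f_1)\cdot\gcd(a_2-b_2,f_2)$, and a divisor of $f$ lies in $D_1=\Div(f_1)\cup\{f_1\}$ (i.e.\ divides $f_1$) exactly when its $f_2$-part is a unit. Hence $(a_1,a_2)$ and $(b_1,b_2)$ are adjacent in $G_f(D_1)$ if and only if $a_2-b_2\in(\F_q[x]/f_2)^{\times}$, with no constraint on the first coordinate. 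Comparing with \cref{def:wreath_product} (and noting that $a_2=b_2$ contributes no edge, so each fiber is independent), this identifies $G_f(D_1)\cong G_{\F_q[x]/f_2}(\{1\})*E_{|f_1|}$: the unitary Cayley graph on $\F_q[x]/f_2$ with every vertex blown up into an independent fiber of size $|f_1|$. By the symmetric computation, $G_f(D_2)\cong G_{\F_q[x]/f_1}(\{1\})*E_{|f_2|}$.

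Next I would apply the structure result for unitary Cayley graphs recalled above, namely $G_{\F_q[x]/f_i}(\{1\})\cong G_{\F_q[x]/\rad(f_i)}(\{1\})*E_{|f_i|/|\rad(f_i)|}$, together with associativity of the wreath (lexicographic) product and the identity $E_a*E_b=E_{ab}$. The two successive independent-set blow-ups then merge into a single empty-graph factor, yielding
\[ G_f(D_1)\cong G_{\F_q[x]/\rad(f_2)}(\{1\})*E_{|f|/|\rad(f_2)|}, \qquad G_f(D_2)\cong G_{\F_q[x]/\rad(f_1)}(\{1\})*E_{|f|/|\rad(f_1)|}. \]
Finally I would invoke the hypothesis that $\rad(f_1)$ and $\rad(f_2)$ have the same factorization type. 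This forces $\deg\rad(f_1)=\deg\rad(f_2)$, hence $|\rad(f_1)|=|\rad(f_2)|$, so both empty-graph factors equal $E_m$ with $m=|f|/|\rad(f_1)|$; moreover \cref{prop:isomorphic_unitary_criterior} gives $G_{\F_q[x]/\rad(f_1)}(\{1\})\cong G_{\F_q[x]/\rad(f_2)}(\{1\})$. Chaining these identifications yields $G_f(D_1)\cong G_f(D_2)\cong G_{\F_q[x]/\rad(f_1)}(\{1\})*E_m$, as claimed.

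The only genuinely delicate point is the bookkeeping in the third paragraph: I must verify associativity of the wreath product and that an empty fiber absorbs correctly, so that the two nested independent-set blow-ups really collapse into a single $E_m$ and the exponents combine as $\tfrac{|f_i|}{|\rad(f_i)|}\cdot|f_{3-i}| = \tfrac{|f|}{|\rad(f_i)|}$. Everything else is a routine unwinding of the adjacency condition under the CRT decomposition.
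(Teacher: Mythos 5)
Your proposal is correct and follows essentially the same route as the paper's own proof: CRT decomposition to see $G_f(D_1)\cong G_{f_2}(\{1\})*E_{|f_1|}$, then the reduction $G_{f_i}(\{1\})\cong G_{\rad(f_i)}(\{1\})*E_{|f_i|/|\rad(f_i)|}$, associativity of the wreath product, and the factorization-type hypothesis to identify the two results. You are in fact slightly more explicit than the paper about the absorption $E_a*E_b=E_{ab}$ and the final appeal to \cref{prop:isomorphic_unitary_criterior}, but the argument is the same.
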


\begin{proof}
    By the Chinese remainder theorem 
    \[ R = \F_q[x]/(f_1f_2) \cong \F_q[x]/f_1 \times \F_q[x]/f_2.\]
    Under this isomorphism and by the definition of $D_1$, we can see that 
    \[ (x_1, y_1), (x_2, y_2) \in \F_q[x]/f_1 \times \F_q[x]/f_2 \]
    are adjacent in $G_{f}(D_1)$ if and only if $y_1-y_2 \in (\F_q[x]/f_2)^{\times}.$ We then see that $G_{f}(D_1)$ is isomorphic to the wreath product $G_{f_2}(\{1\}) * E_{m_1}$ where $m_1=|f_1|.$ We remark that we can also get this isomorphism by observing that $f_2R$ is a homogeneous set in $G_{f}(D_1)$ and the required isomorphism follows from \cite[Theorem 5.5]{minavc2024gcd}.

    By the result from the previous section, we further have 
    \[ G_{f_2}(\{1\}) \cong G_{\rad(f_2)}(\{1\}) * E_{m_2}, \]
    where $m_2=|f_2|/|rad(f_2)|.$ Since the wreath product is associative, we have 
    \[ G_{f}(D_1) \cong G_{\rad(f_2)}(\{1\})* E_m ,\]
    where $m= \dfrac{|f|}{|\rad(f_2)|}.$ An identical argument and the fact that $\rad(f_1), \rad(f_2)$ have the same factorization type show that $G_{f}(D_2)$ is isomorphic to $G_{\rad(f_1)}(\{1\}) * E_m$ as well. We conclude that $G_{f}(D_1) \cong G_f(D_2).$
\end{proof}
\begin{rem}
    We thank Professor Ki-Bong Nam for some discussion that led to the statement for \cref{prop:isomorphic_construction}. Specifically, Professor Nam suggested us to study generalized Euler numbers which we now recall. Let $A$ be a PID, $n \in A$  and let $m$ be a divisor of $n$. The generalized Euler number $\varphi_{m}(n)$ is the number of elements in the following set 
    \[ U_m(n) = \{ a \in A/n \mid \gcd(a,m) =1 \}.\]
    When $m=n$, $|U_m(n)|$ is precisely the Euler totient function of $n.$ In the setting of \cref{prop:isomorphic_construction}, $S_{D_1} = U_{f_2}(f_1f_2)$ where $S_{D_1}$ is the generating set of the gcd-graph $G_{f}(D_1).$
\end{rem}

We have the following corollary which is a by-product of the proof for \cref{prop:isomorphic_construction}. We first introduce a standard notation. Suppose that $g$ is an irreducible polynomial, we write $g^a \mid \mid f$ if $g^a \mid f$ but $g^{a+1} \nmid f.$

\begin{cor}
Let $f$ be a polynomial. Suppose that $f$ has distinct irreducible factors $f_1, f_2$  of the same degree. Let $a_1, a_2$ be two positive integers such that $f_1^{a_1} \mid \mid f$ and $f_2^{a_2} \mid \mid f.$ Let 
 \[ D_1 = \Div(f_1^{a_1}) , \quad  D_2 = \Div(f_2^{a_2})  .\]
Then $G_{f}(D_1) \cong G_{f}(D_2).$
\end{cor}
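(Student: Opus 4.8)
The plan is to mimic the proof of \cref{prop:isomorphic_construction}, the only genuinely new feature being the presence of the ``extra'' part of $f$ that is coprime to both $f_1$ and $f_2$. Write $g = f/(f_1^{a_1} f_2^{a_2})$, so that $\gcd(f_1,g) = \gcd(f_2,g) = 1$ (this is exactly what $f_1^{a_1} \mid\mid f$ and $f_2^{a_2} \mid\mid f$ buy us), and $f = f_1^{a_1} f_2^{a_2} g$ with the three factors pairwise coprime.

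First I would identify the generating set of $G_f(D_1)$. Since $D_1 = \Div(f_1^{a_1}) \cup \{f_1^{a_1}\}$ is precisely the set of all monic divisors of $f_1^{a_1}$, an element $h$ satisfies $\gcd(h,f) \in D_1$ if and only if $\gcd(h, f_2^{a_2} g) = 1$; that is, $S_{D_1} = \{h : \gcd(h, f/f_1^{a_1}) = 1\}$. Exactly as in \cref{prop:isomorphic_construction}, the ideal $(f_2^{a_2} g)\cong \F_q[x]/f_1^{a_1}$, of order $|f_1^{a_1}|$, is then a homogeneous set inducing an edgeless subgraph, and adjacency factors through the reduction $R \to \F_q[x]/(f_2^{a_2} g)$ as the unitary condition. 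By \cite[Theorem 5.5]{minavc2024gcd} this gives
\[ G_f(D_1) \cong G_{\F_q[x]/(f_2^{a_2} g)}(\{1\}) * E_{|f_1^{a_1}|}. \]

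Next I would pass to the semisimplification, as in \cref{prop:isomorphic_construction}. Using that $\rad(f_2^{a_2} g) = f_2 \rad(g)$ together with the associativity of the wreath product and the isomorphism $G_{\F_q[x]/(f_2^{a_2} g)}(\{1\}) \cong G_{\F_q[x]/(f_2 \rad(g))}(\{1\}) * E_{m'}$, I obtain
\[ G_f(D_1) \cong G_{\F_q[x]/(f_2 \rad(g))}(\{1\}) * E_{M}, \qquad M = \frac{|f|}{|f_2 \rad(g)|}. \]
The identical reasoning applied to $D_2$ yields $G_f(D_2) \cong G_{\F_q[x]/(f_1 \rad(g))}(\{1\}) * E_{M'}$ with $M' = |f|/|f_1 \rad(g)|$.

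Finally I would compare the two outputs. Because $f_1$ and $f_2$ are irreducible of the same degree, $|f_1| = |f_2|$, whence $M = M'$. Moreover $f_1 \rad(g)$ and $f_2 \rad(g)$ have the same factorization type: they share all the irreducible factors of $\rad(g)$ (which is coprime to both $f_1$ and $f_2$) and differ only by replacing the irreducible $f_1$ with the irreducible $f_2$ of the same degree. By \cref{prop:isomorphic_unitary_criterior} the two unitary graphs are isomorphic, and therefore $G_f(D_1) \cong G_f(D_2)$. The one step requiring a little care is this last factorization-type comparison, which is exactly the point where the hypotheses that $f_1, f_2$ be distinct irreducibles of equal degree (and the coprimality of $g$ with each) are used; everything else is a direct transcription of the wreath-product and semisimplification arguments already established.
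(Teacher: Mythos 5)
Your proof is correct and follows essentially the same route as the paper: the paper sets $h_i = f/f_i^{a_i}$ (your $f_2^{a_2}g$ and $f_1^{a_1}g$), applies the argument of \cref{prop:isomorphic_construction} to $f = f_i^{a_i}h_i$ to obtain $G_f(D_i) \cong G_{\rad(h_i)}(\{1\}) * E_{|f|/|\rad(h_i)|}$, and concludes by noting that $\rad(h_1)$ and $\rad(h_2)$ have the same factorization type. Your write-up merely makes explicit the coprime part $g$ and the factorization-type comparison $f_1\rad(g)$ versus $f_2\rad(g)$, which the paper leaves implicit.
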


\begin{proof}

For $i \in \{1,2 \}$, let $h_i=f/f_i^{a_i}$. By the same  argument as in the proof of \cref{prop:isomorphic_construction} applied to $f = f_i^{a_i} h_i$ we know that 
\[ G_{f}(D_i) \cong G_{\rad(h_i)}(\{1\}) * E_{|f/\rad(h_i)|}.\]
By definition, we know that $\rad(h_1)$ and $\rad(h_2)$ have the same factorization type. Furthermore, we also know that $|\F_q[x]/(f/\rad(h_1))|=  |\F_q[x]/(f/\rad(h_2))|$. Therefore, $G_{f}(D_1) \cong G_{f}(D_2).$
\end{proof}

For the second phenomenon described in \cref{expl:more_complicated_ex}, we found the following generalization. 

\begin{prop}
Let $f_1, f_2$ be two distinct irreducible polynomials of the same degree. Let $n \geq 2$ be a positive integer, $f=f_1^nf_2$, 
\[ D_1 = \{1, f_2, f_1^2, \ldots, f_1^{n} \},\]
and
\[ D_2 = \{1, f_2, f_1 f_2 \}.\]
Then $G_{f}(D_1)$ and $G_{f}(D_2)$ are isomorphic.
\end{prop}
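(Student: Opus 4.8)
The plan is to pass to Chinese Remainder coordinates, recognize both graphs as the same blow-up of two small graphs on a common quotient, and then write down an explicit linear isomorphism between those two small graphs.

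First I would use the CRT isomorphism $R := \F_q[x]/f \cong R_1 \times R_2$ with $R_1 = \F_q[x]/f_1^n$ and $R_2 = \F_q[x]/f_2$ (valid since $f_1,f_2$ are distinct irreducibles, hence coprime). For $h = (h_1,h_2) \in R$ one has $\gcd(h,f) = f_1^{a} f_2^{b}$, where $a = \min(v_{f_1}(h_1), n)$ ($v_{f_1}$ the $f_1$-adic valuation) and $b = 1$ exactly when $h_2 = 0$. Translating $D_1,D_2$ into conditions on the pair $(a,b)$, the generating set $S_{D_1}$ consists of those $h$ with $a = 0$ (any $b$) or with $a \geq 2$ and $b = 0$, while $S_{D_2}$ consists of those $h$ with $a = 0$ (any $b$) or with $a = 1$ and $b = 1$. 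This bookkeeping is routine but must be carried out carefully.

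The key structural observation is that membership of $h$ in either $S_{D_1}$ or $S_{D_2}$ depends only on $h_1 \bmod f_1^2$ and on $h_2$: the conditions ``$a=0$'', ``$a=1$'', ``$a\geq 2$'' are all detected after reduction modulo $f_1^2$, and $b$ only sees $h_2$. Hence both adjacency relations factor through the quotient homomorphism $\pi: R \to V := (\F_q[x]/f_1^2) \times R_2$, whose fibres (cosets of $\ker\pi$, each of size $m = |f_1|^{\,n-2}$) are independent sets. By the homogeneous-set/wreath description of \cite[Theorem 5.5]{minavc2024gcd} this yields $G_f(D_i) \cong G^{(i)} * E_m$, where $G^{(i)} = \mathrm{Cay}(V, T^{(i)})$ and $T^{(i)} = \pi(S_{D_i})$. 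Writing $S = \F_q[x]/f_1^2$ with maximal ideal $\mathfrak m = (f_1)$ and residue field of $\F_q$-dimension $d = \deg f_1$, the two connection sets take the form
\[
T^{(1)} = \{(\delta,\eta): \delta \notin \mathfrak m\} \cup \bigl(\{0\}\times(R_2\setminus 0)\bigr), \qquad T^{(2)} = \{(\delta,\eta): \delta \notin \mathfrak m\} \cup \bigl((\mathfrak m\setminus 0)\times\{0\}\bigr),
\]
so $G^{(1)}$ and $G^{(2)}$ agree on the common ``unit'' part and differ only in one swapped layer: $\{0\}\times(R_2\setminus 0)$ versus $(\mathfrak m\setminus 0)\times\{0\}$.

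It then remains to produce an automorphism of the additive group $V$ carrying $T^{(1)}$ to $T^{(2)}$, and this is where the hypothesis $\deg f_1 = \deg f_2$ enters and is the crux: $\mathfrak m$ and $R_2$ are both $\F_q$-vector spaces of dimension $d$, so $|\mathfrak m\setminus 0| = |R_2\setminus 0| = q^d-1$ and there is an $\F_q$-linear isomorphism $\theta: R_2 \to \mathfrak m$. Fixing an $\F_q$-subspace $W \subset S$ complementary to $\mathfrak m$, so that $V = W \oplus \mathfrak m \oplus R_2$, I would define $\Phi$ to be the identity on $W$ and to interchange the summands $\mathfrak m$ and $R_2$ via $\theta^{-1}$ and $\theta$. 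Then $\Phi$ preserves the $W$-component, hence preserves the condition $\delta\notin\mathfrak m$ and therefore the common part of the two connection sets, while it sends $\{0\}\times(R_2\setminus 0)$ bijectively onto $(\mathfrak m\setminus 0)\times\{0\}$; thus $\Phi(T^{(1)}) = T^{(2)}$ and $\Phi$ is a graph isomorphism $G^{(1)}\xrightarrow{\sim}G^{(2)}$. Since an isomorphism of the first factor induces an isomorphism of the wreath product, $G_f(D_1) \cong G^{(1)} * E_m \cong G^{(2)} * E_m \cong G_f(D_2)$. I expect the main obstacle to be the structural step of recognizing the common blow-up through $\pi$ and isolating the single swapped layer; once that is done, constructing $\Phi$ is immediate and is precisely what forces the equal-degree hypothesis.
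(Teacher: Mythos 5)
Your proof is correct, and the bookkeeping checks out: the connection sets $S_{D_1}$, $S_{D_2}$ really are unions of cosets of the ideal $(f_1^2f_2)$ (equivalently $f_1^2R_1\times\{0\}$ in CRT coordinates) avoiding that ideal, so both graphs are blow-ups $G^{(i)}*E_{|f_1|^{n-2}}$ of Cayley graphs on $V=(\F_q[x]/f_1^2)\times(\F_q[x]/f_2)$, and your $\F_q$-linear swap of $\mathfrak m$ and $R_2$ does carry $T^{(1)}$ to $T^{(2)}$ while fixing the common unit part. This is a genuinely different route from the paper's at the decisive step. The paper instead quotients by the shallower ideal $(f_1)$, obtaining $G_f(D_i)\cong K_{|f_1|}*H_i$ with $H_1=G_{f_1^{n-1}f_2}(\{f_1,\dots,f_1^{n-1}\})$ and $H_2=G_{f_1^{n-1}f_2}(\{f_2\})$, then shows $H_1\cong H_2$ by splitting each into disjoint components ($|f_1|$ copies versus $|f_2|$ copies) and identifying each component with $K_{|f_1|}*E_{|f_1|^{n-2}}$ via its earlier results (\cref{thm:wreath_product_prime_power} and \cref{prop:isomorphic_construction}); the equal-degree hypothesis enters there twice, once to match the number of components and once to match $K_{|f_1|}$ with $K_{|f_2|}$. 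Your argument buys a single explicit group automorphism of the base graph in place of a chain of abstract isomorphisms, is self-contained modulo the standard blow-up lemma, and isolates precisely where $\deg f_1=\deg f_2$ is used (the dimension count $\dim_{\F_q}\mathfrak m=\dim_{\F_q}R_2$); the paper's version buys reuse of machinery already established for prime powers and for the sets $\Div(g)\cup\{g\}$, which keeps the proof short in context. One small point worth making explicit if you write this up: the condition ``$a\ge 2$'' must be read as ``$f_1^2\mid h_1$ in $R_1$, including $h_1=0$'' (where $a=n$), so that the whole range $f_1^2,\dots,f_1^n$ of divisors in $D_1$ is accounted for by the single condition $\delta=0$ in $\F_q[x]/f_1^2$ together with $\eta\neq 0$; you do handle this correctly, but it is the one place a reader might suspect a leak.
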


\begin{proof}
    By \cite[Theorem 5.5]{minavc2024gcd}, the ideal $I_{f_1}$ generated by $f_1$ is a homogeneous set in $G_{f}(D_1)$ as well as $G_{f}(D_2).$ Furthermore 
    \[ G_{f}(D_1) \cong G_{f_1}(\{1\}) * G_{f_1^{n-1}f_2}(\{f_1, f_1^2, \ldots, f_1^{n-1}\}).\]
Similarly, we also have 
        \[ G_{f}(D_2) \cong G_{f_1}(\{1\}) * G_{f_1^{n-1}f_2}(\{f_2\}).\]
Therefore, to complete the proof, we only need to show that 
\[ G_{f_1^{n-1}f_2}(\{f_1, f_1^2, \ldots, f_1^{n-1}\}) \cong G_{f_1^{n-1}f_2}(\{f_2\}) .\] 
We observe that   $G_{f_1^{n-1}f_2}(\{f_1, f_1^2, \ldots, f_1^{n-1}\})$ is isomorphic to $|f_1|$ disjoint copies of \linebreak  $G_{f_1^{n-2}f_2}(\{1, f_1, \ldots, f_1^{n-2}\})$ and $G_{f_1^{n-1}f_2}(\{f_2\})$ is isomorphic to $|f_2|$ disjoint copies of $G_{f_1^{n-1}}(\{1 \}).$  By \cref{thm:wreath_product_prime_power} and the proof of \cref{prop:isomorphic_construction} we have 
\[ G_{f_1^{n-2}f_2}(\{1, f_1, \ldots, f_1^{n-2}\}) \cong K_{|f_1|} * E_{|f_1|^{n-2}} \cong G_{f_1^{n-1}}(\{1 \}).\]
Summarizing all these isomorphisms, we conclude that $G_{f}(D_1)$ and $G_{f}(D_2)$ are isomorphic.
\end{proof}

\begin{rem}
A by-product of the above proof is that if $f_1$ and $f_2$ are two distinct irreducible polynomials of the same degree then for each $m \geq 1$
\[ G_{f_1^{m}f_2}(\{1, f_1, \ldots, f_1^{m}\}) \cong K_{|f_1|} * E_{|f_1|^{m}} \cong G_{f_1^{m+1}}(\{1 \}).\]
This isomorphism provides a simple construction of two isomorphic gcd-graphs with different moduli. 
\end{rem}

\begin{rem} \label{rem:same_degree}
A conjecture of So \cite[Conjecture 7.3]{so2006integral} says that if $n$ is an integer and $D_1, D_2$ are two distinct subsets of $\Div(n)$, then the two gcd-graphs over $\Z$$, G_n(D_1)$ and $G_n(D_2)$ are not isomorphic. 

As we have shown, this conjecture is not true if we consider gcd-graphs over $\F_q[x].$ We remark, however, that our constructions of isomorphic gcd-graphs of the form $G_{f}(D)$ where $f$ is fixed are based on a crucial fact that $f$ has two irreducible factors of the same degree. This could happen over $\F_q[x]$ but not over $\Z.$ As a result, we wonder whether So's conjecture still holds for the family $G_{f}(D)$ under the additional assumption that irreducible factors of $f$ have different degrees. 
\end{rem}

\section*{Acknowledgements}
We are grateful to Professor Torsten Sander, whose work on gcd-graphs has been a significant source of inspiration. We also appreciate his insightful remarks and continued encouragement. Our thanks extend to Professor Ki-Bong Nam for his interest and valuable suggestions. Parts of this work are motivated by his ideas on generalized Euler numbers.
Last but not least, we would like to thank the referee, Professor Jon Berrick, and the editor for their suggestions  which improved our exposition.

\end{document}